\renewcommand{\tilde}{\widetilde}
\def\cal{\mathcal }
\theoremstyle{plain}
\newtheorem{theorem}{Theorem}[section]
\newtheorem{corollary}[theorem]{Corollary}
\newtheorem{lemma}[theorem]{Lemma}
\newtheorem{proposition}[theorem]{Proposition}
\theoremstyle{definition}
\newtheorem{remark}[theorem]{Remark}
\numberwithin{equation}{section}
\title[Martingales associated with Bessel processes]{Some martingales associated with
  multivariate Bessel  processes}
\author{Miklos Kornyik}
\address{E\"otv\"os Lor\'and University, Department of Probability Theory and Statistics,
P\'azm\'any P\'eter s\'et\'any 1/C., H-1117, Budapest, Hungary \and
Wigner Research Centre for Physics, Department of Quantum Optics and Quantum Information,  Konkoly-Thege Mikl\'os \'ut 29-33., H-1121, Budapest, Hungary }
\email{koma@cs.elte.hu}
\author{ Michael Voit}
\address{Fakult\"at Mathematik, Technische Universit\"at Dortmund,
          Vogelpothsweg 87,
          D-44221 Dortmund, Germany}
\email{michael.voit@math.tu-dortmund.de}
\author{ Jeannette H.C. Woerner} 
\address{Fakult\"at Mathematik, Technische Universit\"at Dortmund,
          Vogelpothsweg 87,
          D-44221 Dortmund, Germany}
\email{jeannette.woerner@math.tu-dortmund.de}
\subjclass[2010]{Primary 60F15; Secondary 60F05, 60J60, 60B20, 60H20, 70F10, 82C22, 33C67 }
\keywords{Interacting particle systems, Calogero-Moser-Sutherland models, 
zeros of Hermite polynomials, zeros of Laguerre polynomials, Hermite ensembles,  Laguerre ensembles.}
\begin{document}
\date{\today}

\begin{abstract} We study  Bessel processes on Weyl chambers of types A and B on
$\mathbb R^N$. Using elementary symmetric functions, we present several space-time-harmonic functions and thus
martingales for these processes $(X_t)_{t\ge0}$ which are independent from one parameter of these processes.
As a consequence, $p(y):=\mathbb E(\prod_{i=1}^N (y-X_t^i))$ can be expressed via classical 
orthogonal polynomials. Such formulas on characteristic polynomials admit interpretations
 in random matrix theory where they are partially known by Diaconis, Forrester, and Gamburd.
\end{abstract}

\maketitle

\section{Introduction}

Interacting Calogero-Moser-Sutherland particle models
 on  $\mathbb R$ with $N$ particles 
 can be described via Bessel processes $(X_t)_{t\ge0}$ associated with root systems;
 see e.g.~\cite{CGY,GY,R, RV1,RV2,DV,AKM1, AKM2}.
These processes are classified via root systems and finitely many  multiplicity parameters
which control the interaction. 
In this paper, we only study the root systems
 $A_{N-1}$ and $B_N$.

In the  case $A_{N-1}$, we have a  multiplicity $k\in[0,\infty[$,
 $(X_t)_{t\ge0}$ lives on the closed Weyl chamber
$$C_N^A:=\{x\in \mathbb R^N: \quad x_1\ge x_2\ge\ldots\ge x_N\},$$
 the generator of the transition semigroup is 
\begin{equation}\label{def-L-A} \Delta_k f:= \frac{1}{2} \Delta f +
 k \sum_{i=1}^N\Bigl( \sum_{j:j\ne i} \frac{1}{x_i-x_j}\Bigr) \frac{\partial}{\partial x_i}f ,
 \end{equation}
and we assume reflecting boundaries, i.e., the domain of $ \Delta_k$ may be chosen as
$$D(\Delta_k):=\{f|_{C_N^A}: \>\> f\in C^{(2)}(\mathbb R^N), \>\>\> f\>\>\text{ invariant under all coordinate-premutations}\}.$$

In the case $B_N$,  we have two  multiplicities $k_1,k_2\ge 0$, the processes live on 
$$C_N^B:=\{x\in \mathbb R^N: \quad x_1\ge x_2\ge\ldots\ge x_N\ge0\},$$
 the generator of the transition semigroup is 
\begin{equation}\label{def-L-B} \Delta_{k_1,k_2}f:= \frac{1}{2} \Delta f +
 k_2 \sum_{i=1}^N \sum_{j:j\ne i} \Bigl( \frac{1}{x_i-x_j}+\frac{1}{x_i+x_j}  \Bigr)
 \frac{\partial}{\partial x_i}f 
\> + k_1\sum_{i=1}^N\frac{1}{x_i}\frac{\partial}{\partial x_i}f, \end{equation}
and we again assume reflecting boundaries.

The transition probabilities of the  diffusions
$(X_{t})_{t\ge0}$ on $C_N$ (with $C_N=C_N^A$ or $C_N^B$) are  as
 follows by \cite{R,RV1,RV2}:
For any $x\in C_N$, and $E\subset C_N$ Borel set, 
\begin{equation}\label{density-general}
K_t(x,E)=c_k \int_E t^{-\gamma_k-N/2}\  \mathrm e^{-(\|x\|^2+\|y\|^2)/(2t)} J_k(\frac{x}{\sqrt{t}}, \frac{y}{\sqrt{t}}) 
\cdot w_k(y)\> dy
\end{equation}
with
\begin{equation}\label{def-wk}
w_k^A(x):= \prod_{i<j}(x_i-x_j)^{2k}, \quad
w_k^B(x):= \prod_{i<j}(x_i^2-x_j^2)^{2k_2}\cdot \prod_{i=1}^N x_i^{2k_1},\end{equation}
and
\begin{equation}\label{def-gamma}
\gamma_k^A=kN(N-1)/2, \quad\quad  \gamma_{(k_1,k_2)}^B=k_2N(N-1)+k_1N
\end{equation}
respectively. The weights
$w_k$ are homogeneous of degree $2\gamma_k$.
Furthermore, the
$c_k>0$ are known normalization constants, and
$J_k$ is a multivariate Bessel function of
 type $A_{N-1}$ or $B_N$ with multiplicities $k$ or $(k_1,k_2)$ respectively;
 see e.g. \cite{R}.
$J_k$ is analytic on $\mathbb C^N \times \mathbb C^N $ with
$ J_k(x,y)>0$ for $x,y\in \mathbb R^N $.
Moreover, $J_k(x,y)=J_k(y,x)$ and $J_k(0,y)=1$
for all $x,y\in \mathbb C^N $.

Therefore, if $X_0=0$, then for $t>0$, $X_{t}$ has the Lebesgue density
\begin{equation}\label{density-A-0}
 \frac{c_k}{t^{\gamma_k+N/2}} e^{-\|y\|^2/(2t)} \cdot w_k(y)\> dy
\end{equation}
on $C_N$ for $t>0$.
 In particular, in the case $A_{N-1}$ for $k=1/2, 1,2$,  $X_{t}$
has the distribution of the eigenvalues of
Gaussian orthogonal, unitary, and symplectic ensembles up to scalings; see e.g. \cite{Me}.
Moreover, for general $k>0$, these distributions appear as spectral  distributions of
  the tridiagonal $\beta$-Hermite ensembles of
Dumitriu and Edelman \cite{DE1}. Similar interpretations exist in the  case $B_{N}$
for Laguerre ensembles and the  tridiagonal $\beta$-Laguerre ensembles of \cite{DE1}.

In this paper we use the generators (\ref{def-L-A}), (\ref{def-L-B}) and the associated 
stochastic differential equations for the diffusions $(X_t)_{t\ge0}$ and construct polynomials $p_l$ 
in $N+1$ variables of order $l=1,\ldots,N$ such that the processes $(p_l(X_t,t))_{t\ge0}$
are martingales. The functions $p_l$
 are constructed via elementary symmetric polynomials such that,
 up to some rescaling of the processes  $(X_t)_{t\ge0}$, these polynomials do not depend on the parameter  $k$ in the
 case $A_{N-1}$, while they depend on one parameter only in the case $B_N$. Moreover, due to this observation,
 our martingale result can be also extended to the case where the independent parameter is equal to $\infty$,
in which case the SDE of the renormalization of $(X_t)_{t\ge0}$ simplifies to an ODE. The solution of this ODE starting in the origin $0\in C_N$ can be described explicitely in terms of the zeros of the Hermite polynomial 
$H_N$ or the Laguerre polynomial $L_N^{(\alpha)}$ with a suitable $\alpha$  in the cases $A_{N-1}$ or $B_N$
respectively.
This observation will lead to closed formulas for 
$$\mathbb E\bigl(\prod_{i=1}^N (y- X_{t,i}^\beta)\bigr) \quad (y\in\mathbb R)$$
for the processes $(X_t)_{t\ge0}$ starting in $0$, which involve  Hermite and  Laguerre polynomials.
These formulas are known for the multiplicities associated with the classical random matrix ensembles 
\cite{DG, FG}. 

This paper is organized as follows: In Section 2 we consider the case $A_{N-1}$ while Section 3 is devoted to
 the case $B_N$.

We finally recapitulate  the following well-known result
 (see Lemma 3.4, Corollary 6.6, and Proposition 6.8 of \cite{CGY} and
 \cite{Sch, GM} for corrections of the proofs of these results) which will be the basis 
for our SDE approach:

\begin{theorem}\label{SDE-basic} Let $k>0$ or $k_1,k_2>0$ in the 
$A_{N-1}$- or $B_N$-case. Then, for each starting point $x\in C_N$ and  $t>0$,
 the initial problem
\begin{equation}\label{SDE-general}
X_0=x,\quad\quad dX_t= dB_t +  \frac{1}{2} (\nabla(\ln w_k))(X_t) \> dt
\end{equation}
 where $w_k$ is defined by (\ref{def-wk}) and $(B_t)_{t\ge0}$ is an $N$-dimensional Brownian motion,
has a unique strong solution $(X_t)_{t\ge0}$.
This solution is a Bessel process as  above.

Moreover, if  $k\ge 1/2$ in the 
$A_{N-1}$-case or $k_1,k_2\ge 1/2$ in the $B_N$-case,  then $X_t$ is in  the interior on $C_N$
 almost surely for $t>0$.
\end{theorem}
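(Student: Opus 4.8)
The plan is to prove this in two parts: existence/uniqueness of a strong solution to the SDE, and the a.s.\ interior property under the stronger hypothesis on the multiplicities. Since the theorem is quoted from the literature, I would give a proof that matches the structure of the cited references, emphasizing the points where care (cf.\ the noted corrections in \cite{Sch, GM}) is needed.

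**Step 1: local existence and uniqueness in the open chamber.** Observe that on the open Weyl chamber $C_N^\circ$ the drift $\tfrac12\nabla(\ln w_k)$ is smooth, namely $k\sum_{j\ne i}(x_i-x_j)^{-1}$ in the $A_{N-1}$ case and the corresponding expression with the extra $k_1/x_i$ terms in the $B_N$ case. On $C_N^\circ$ the coefficients are locally Lipschitz, so standard SDE theory gives a unique strong solution up to the explosion time $\tau$, which is the first time $X_t$ hits the boundary $\partial C_N$ (or infinity). The identification with the Bessel process follows because the generator of \eqref{SDE-general} is exactly $\Delta_k$ from \eqref{def-L-A}/\eqref{def-L-B}: indeed $\tfrac12\partial_i\ln w_k^A = k\sum_{j\ne i}(x_i-x_j)^{-1}$, matching the first-order part of $\Delta_k$, and the second-order part is $\tfrac12\Delta$, the Brownian part.

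**Step 2: non-collision / non-explosion, i.e.\ $\tau=\infty$ a.s.** This is the crux and the step most prone to error. The standard device is a comparison/Lyapunov argument: for the $A$-case consider the process of pairwise differences, or more robustly apply It\^o's formula to $\log w_k(X_t)$ (or to suitable powers of the minimal gap $x_i-x_{i+1}$ and, in the $B$-case, of $x_N$), and show the resulting drift dominates the diffusion enough that the boundary is not reached in finite time when $k>0$ (resp.\ $k_1,k_2>0$); this is where the local martingale part must be controlled by a localization and a Gronwall-type estimate, ruling out explosion to infinity simultaneously. One then invokes the known fact (from \cite{R, RV1, RV2}) that the resulting diffusion on $C_N$ with reflecting boundary has transition kernel \eqref{density-general}, identifying it as the Bessel process; alternatively one checks directly that the semigroup generated by $\Delta_k$ on $D(\Delta_k)$ coincides with the law of the solution, which is the route in \cite{CGY}.

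**Step 3: the interior property for $k\ge 1/2$.** Here one refines Step 2: the claim is not just $\tau=\infty$ but that for every fixed $t>0$, $X_t\in C_N^\circ$ a.s. The natural comparison is with a Bessel process (in the classical one-dimensional sense) of dimension $\delta$ related to the multiplicity: a squared pairwise difference, or $x_N^2$ in the $B$-case, behaves like a squared Bessel process of dimension $\ge 2$ precisely when the relevant multiplicity is $\ge 1/2$, and such a process a.s.\ never hits $0$ at any fixed time (indeed never returns to $0$). Summing over the finitely many ``walls'' $\{x_i=x_{i+1}\}$ (and $\{x_N=0\}$) and using a union bound over this finite set gives $\mathbb P(X_t\in\partial C_N)=0$. \textbf{The main obstacle} is making the comparison rigorous when several coordinates could cluster simultaneously — the drift on one gap depends on all the other gaps — so one needs a careful stopping-time argument isolating the first wall to be approached, as in the corrected arguments of \cite{Sch, GM}; granting those, the conclusion follows.
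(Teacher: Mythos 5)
The paper does not actually prove Theorem \ref{SDE-basic}: it is explicitly ``recapitulated'' as a known result, with the proof delegated to Lemma 3.4, Corollary 6.6 and Proposition 6.8 of \cite{CGY} and to \cite{Sch,GM} for the corrected arguments. So your outline can only be measured against those references, and against that standard it is a correct roadmap whose two hard steps are not carried out. Step 1 is fine: on the open chamber the drift is locally Lipschitz, and the identification of the generator with $\Delta_k$ from (\ref{def-L-A})/(\ref{def-L-B}) is exactly the computation $\tfrac12\partial_i\ln w_k = k\sum_{j\ne i}(x_i-x_j)^{-1}$ (plus the $B$-terms). But in Step 2 the sentence ``show the resulting drift dominates the diffusion enough'' \emph{is} the theorem; nothing in your text indicates why the domination holds. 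The argument that actually works applies It\^o's formula to a Lyapunov function built from $-\log w_k$ and hinges on the algebraic cancellation $\sum_{i}\sum_{j\ne i,\,l\ne i,\,j\ne l}\bigl((x_i-x_j)(x_i-x_l)\bigr)^{-1}=0$, without which the drift of $-\log w_k(X_t)$ is not controlled. Moreover the estimate must be quantitative in $k$: for $0<k<1/2$ the process \emph{does} hit the boundary (only instantaneous reflection keeps it in $C_N$), so a pure sign/repulsion argument cannot be the whole story, and your sketch does not distinguish the regimes.

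In Step 3 the proposed mechanism --- compare each gap $X_{t,i}-X_{t,i+1}$ (or $X_{t,N}$) with a one-dimensional Bessel process of dimension $2k+1\ge 2$ and take a union bound over the walls --- fails as stated. For $j<i$ the contribution $\frac{1}{X_{t,i}-X_{t,j}}-\frac{1}{X_{t,i+1}-X_{t,j}}$ to the drift of the gap is \emph{negative} (attractive), so the gap is not stochastically bounded below by a Bessel process and the one-dimensional comparison theorem does not apply wall by wall. You correctly identify this as ``the main obstacle'' and then write ``granting those, the conclusion follows,'' which defers precisely the step that \cite{Sch} and \cite{GM} were written to repair. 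That is legitimate as a citation (and is exactly what the paper itself does), but it means your text is an annotated pointer to the literature rather than a proof: the two assertions that carry all the content --- non-explosion/non-collision for $k>0$ and the fixed-time interior property for $k\ge 1/2$ --- are both left to the references.
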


\section{Bessel processes of type A} 

We now study  Bessel processes
of type $A_{N-1}$ where we denote the multiplicities by  $\beta\ge 0$ instead of $k$
 in order to avoid  confusions with indices and coordinates.
Therefore, let  $( X_{t}^\beta)_{t\ge0}$ be a  Bessel process
of type $A_{N-1}$  with multiplicity $\beta\ge 0$ with values from
$$C_N^A:=\{x\in \mathbb R^N: \quad x_1\ge x_2\ge\ldots\ge x_N\}.$$
 $( X_{t}^\beta)_{t\ge0}$  satisfies the SDE 
\begin{equation}\label{SDE-A}
 dX_{t,i}^\beta = dB_{t,i}+ \beta\sum_{j:j\ne i} \frac{1}{X_{t,i}^\beta-X_{t,j}^\beta} dt \quad\quad(i=1,\ldots,N).
\end{equation}
with an $N$-dimensional Brownian motion $(B_{t,1},\ldots,B_{t,N})_{t\ge0}$, 
where the paths are reflected whenever they hit the boundary $\partial C_N^A$ of $C_N^A$.
By Theorem \ref{SDE-basic},  $( X_{t}^\beta)_{t\ge0}$ 
 does not meet  $\partial C_N^A$ for $t>0$ a.s.~for $\beta\ge 1/2$.

For the sake of convenience we will study the renormalized processes
 $(\tilde X_{t}:=\tilde X_{t}^\beta:=X_{t}^ \beta/\sqrt \beta)_{t\ge0}$ which satisfy
\begin{equation}\label{SDE-A-normalized}
d\tilde X_{t,i}^\beta =\frac{1}{\sqrt \beta}dB_{t,i} + \sum_{j:j\ne i} 
 \frac{1}{\tilde X_{t,i}^\beta-\tilde X_{t,j}^\beta }dt\quad\quad(i=1,\ldots,m). 
\end{equation}
>From now on the parameter $\beta$ in $\tilde X^\beta_t$ will be omitted unless it is explicitly referred to. 

Now we will derive some results for symmetric polynomials of $( X_{t})_{t\ge0}$ and
$(\tilde X_{t})_{t\ge0}$.
First let us we recapitulate that the elementary
symmetric polynomials $e_k^m$ ($m\in \mathbb N,$ $ k=0,\ldots,m$) in $m$ variables  are characterized by
\begin{equation}\label{symmetric-poly}
\prod_{k=1}^m (z-x_k) = \sum_{k=0}^{m}(-1)^{m-k}  e^m_{m-k}(x) z^k \quad\quad (z\in\mathbb C, \> x=(x_1,\ldots,x_m)),
\end{equation}
in particular,
$e_0^m=1, \> e_1^m(x)=\sum_{k=1}^m x_k ,\quad \ldots\quad  e_m^m(x)=\prod_{k=1}^m x_k$. 

We need a further notation: For  a non-empty set $S\subset \{1,\ldots,N\}$, let $\tilde X_{t,S}$ be the $\mathbb R^{|S|}$-valued
 random vector with the 
coordinates $\tilde  X_{t,i}$ for $i\in S$ in the natural ordering on $S$. 
 The following technical observation is our starting point:

\begin{lemma}\label{symmetric-pol-in-t}
 For $\beta\ge 1/2$,  $k=2,\ldots,N$, and $l\ge 0$
\begin{align}d(t^l\cdot e_k^N(\tilde X_{t}))=&
\frac{t^l}{\sqrt\beta}\sum_{j=1}^N e_{k-1}^{N-1}(\tilde X_{t,\{1,\ldots,N\}\setminus\{j\}})\>
 dB_{t,j}\notag\\
&+\Biggl(  lt^{l-1}\cdot e^N_k(\tilde X_{t})
- \frac{t^l}{2} (N-k+2)(N-k+1)e_{k-2}^{N}(\tilde X_{t})\Biggr)dt.
\notag\end{align}
\end{lemma}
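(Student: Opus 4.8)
The plan is to apply It\^o's formula to the process $t^l \cdot e_k^N(\tilde X_t)$, using the SDE \eqref{SDE-A-normalized} for the coordinates of $\tilde X_t$. Since $\beta \ge 1/2$, Theorem~\ref{SDE-basic} guarantees that $\tilde X_t$ stays in the interior of $C_N^A$ for $t>0$, so the drift coefficients $1/(\tilde X_{t,i}-\tilde X_{t,j})$ are well-defined and the computation is legitimate. First I would record the two basic partial-derivative identities for elementary symmetric polynomials: $\partial e_k^N/\partial x_i = e_{k-1}^{N-1}(x_{\{1,\dots,N\}\setminus\{i\}})$, and the second derivatives $\partial^2 e_k^N/\partial x_i^2 = 0$ while $\partial^2 e_k^N/\partial x_i\partial x_j = e_{k-2}^{N-2}(x_{\{1,\dots,N\}\setminus\{i,j\}})$ for $i\ne j$. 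These, together with $d[\tilde X^i, \tilde X^j]_t = \tfrac{1}{\beta}\delta_{ij}\,dt$ from \eqref{SDE-A-normalized}, give three contributions: the time-derivative term $l t^{l-1} e_k^N(\tilde X_t)\,dt$; the martingale term $\frac{t^l}{\sqrt\beta}\sum_j e_{k-1}^{N-1}(\tilde X_{t,\{1,\dots,N\}\setminus\{j\}})\,dB_{t,j}$, which matches the claimed stochastic integral directly; a drift term from the first-order part of It\^o coming from the interaction drift in \eqref{SDE-A-normalized}; and a drift term $\frac{t^l}{2\beta}\sum_i \partial^2 e_k^N/\partial x_i^2 \,dt$, which vanishes since $e_k^N$ is multilinear.

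The crux is therefore to show that the interaction-drift contribution,
\begin{equation*}
A := \sum_{i=1}^N \sum_{j\ne i} \frac{1}{\tilde X_{t,i}-\tilde X_{t,j}}\, e_{k-1}^{N-1}(\tilde X_{t,\{1,\dots,N\}\setminus\{i\}}),
\end{equation*}
equals $-\tfrac12 (N-k+2)(N-k+1)\, e_{k-2}^N(\tilde X_t)$; note the $1/\beta$ from the It\^o second-order term is absent here because the drift in \eqref{SDE-A-normalized} carries no $1/\beta$, matching the stated $dt$-coefficient. To evaluate $A$, symmetrize in $i\leftrightarrow j$: writing $A = \tfrac12 \sum_{i\ne j} \frac{e_{k-1}^{N-1}(\tilde X\setminus\{i\}) - e_{k-1}^{N-1}(\tilde X\setminus\{j\})}{\tilde X_{t,i}-\tilde X_{t,j}}$. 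The key algebraic identity is that $e_{k-1}^{N-1}(x\setminus\{i\}) - e_{k-1}^{N-1}(x\setminus\{j\}) = (x_j - x_i)\, e_{k-2}^{N-2}(x\setminus\{i,j\})$, which follows by splitting each $e_{k-1}^{N-1}$ according to whether the index $j$ (resp. $i$) is included. Substituting this collapses the difference quotient and yields $A = -\tfrac12 \sum_{i\ne j} e_{k-2}^{N-2}(\tilde X_{t,\{1,\dots,N\}\setminus\{i,j\}})$.

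It then remains to prove the counting identity
\begin{equation*}
\sum_{i\ne j} e_{k-2}^{N-2}(x_{\{1,\dots,N\}\setminus\{i,j\}}) = (N-k+2)(N-k+1)\, e_{k-2}^N(x),
\end{equation*}
which I would do by a monomial-counting argument: a fixed squarefree monomial of degree $k-2$ appears in $e_{k-2}^{N-2}(x\setminus\{i,j\})$ precisely when neither $i$ nor $j$ is among the $k-2$ indices in the monomial, i.e. for ordered pairs $(i,j)$ drawn from the remaining $N-(k-2)$ indices, of which there are $(N-k+2)(N-k+1)$. Assembling the time term, the zero second-order term, and this evaluation of the drift gives exactly the claimed expression. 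I expect no serious obstacle; the only point demanding care is bookkeeping with the index sets in the two symmetric-function identities, and confirming that the argument is valid because the process avoids the boundary for $\beta \ge 1/2$, so that $e_{k-1}^{N-1}$ and the difference quotients are genuinely smooth along the path.
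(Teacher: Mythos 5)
Your proposal is correct and follows essentially the same route as the paper: It\^o's formula applied to $t^l e_k^N(\tilde X_t)$ (with the second-order term vanishing by multilinearity), symmetrization of the interaction-drift double sum, and the two identities $e_{k-1}^{N-1}(x\setminus\{i\})-e_{k-1}^{N-1}(x\setminus\{j\})=(x_j-x_i)e_{k-2}^{N-2}(x\setminus\{i,j\})$ and $\sum_{i\ne j}e_{k-2}^{N-2}(x\setminus\{i,j\})=(N-k+2)(N-k+1)e_{k-2}^N(x)$. The only difference is that you prove these combinatorial identities directly (by index splitting and monomial counting) where the paper cites them from \cite{VW}.
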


\begin{proof}
 It\^{o}'s formula and the SDE  (\ref{SDE-A-normalized}) show that
$$d(t^l\cdot e_k^N(\tilde X_{t}))= 
lt^{l-1}\cdot e_k^N(\tilde X_{t}) \> dt+
t^l\sum_{j=1}^N  e_{k-1}^{N-1}(\tilde X_{t,\{1,\ldots,N\}\setminus\{j\}})\> d\tilde X_{t,j}.$$
Therefore, by the SDE (\ref{SDE-A-normalized}), 
\begin{align}\label{elementary-symm-a-1}
d(t^l \cdot e_k^N(\tilde X_{t}))&=  lt^{l-1}\cdot e_k^N(\tilde X_{t}) \> dt+ \frac{t^l}{\sqrt\beta}
\sum_{j=1}^N  e_{k-1}^{N-1}(\tilde X_{t,\{1,\ldots,N\}\setminus\{j\}})\>
 dB_{t,j}\\
&\quad+
t^l\sum_{j=1}^N \sum_{i: i\ne j} 
\frac{  e_{k-1}^{N-1}(\tilde X_{t,\{1,\ldots,N\}\setminus\{j\}})}{\tilde X_{t,j}-\tilde X_{t,i}}dt\notag\\
&=  lt^{l-1}\cdot e_k^N(\tilde X_{t}) \> dt+
\frac{t^l}{\sqrt\beta}\sum_{j=1}^N e_{k-1}^{N-1}(\tilde X_{t\{1,\ldots,N\}\setminus\{j\}})\>
 dB_{t,j}\notag\\
&\quad+
\frac{t^l}{2}     \sum_{i,j=1,\ldots,n; i\ne j}
\frac{  e_{k-1}^{N-1}(\tilde X_{t,\{1,\ldots,N\}\setminus\{j\}})- 
e_{k-1}^{N-1}(\tilde X_{t,\{1,\ldots,N\}\setminus\{i\}})  }{\tilde X_{t,j}-\tilde X_{t,i}}dt.\notag
\end{align}
Moreover, by simple combinatorial computations (see (2.10), (2.11) in \cite{VW}) we have for $i\neq j$ that
\begin{equation}\label{elementary-symm-a-2}
  e_{k-1}^{N-1}(\tilde X_{t,\{1,\ldots,N\}\setminus\{j\}})- e_{k-1}^{N-1}(\tilde X_{t,\{1,\ldots,N\}\setminus\{i\}})=
 (\tilde X_{t,i}-\tilde X_{t,j})e_{k-2}^{N-2}(\tilde X_{t,\{1,\ldots,N\}\setminus\{i,j\}})
\end{equation}
and 
\begin{equation}\label{elementary-symm-a-3}
 \sum_{i,j=1,\ldots,N; i\ne j}e_{k-2}^{N-2}(\tilde X_{t,\{1,\ldots,N\}\setminus\{i,j\}})= 
(N-k+2)(N-k+1)e^N_{k-2}(\tilde X_{t}).
\end{equation}
 (\ref{elementary-symm-a-1})-(\ref{elementary-symm-a-3}) now lead to the lemma.
\end{proof}

We also need the following well known observation.
 Here we always use the canonical filtration of the Brownian motion $(B_t)_{t\ge0}$.

\begin{lemma}\label{Brownian-martingale}
For each polynomial $p$ in $N$ variables, $\beta\ge0$, $i=1,\ldots,N$, and  $l\ge0$, the process
$\Bigl(\int_0^t s^l\cdot p(\tilde X_{s})\> dB_{s,i})_{t\ge0}$
is a martingale.
\end{lemma}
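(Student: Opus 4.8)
The plan is to show that the stochastic integral $M_t := \int_0^t s^l\, p(\tilde X_s)\, dB_{s,i}$ is a true martingale, not merely a local martingale, by verifying the standard $L^2$ (or Itô-isometry) integrability criterion: it suffices to check that $\mathbb{E}\bigl(\int_0^T s^{2l}\, p(\tilde X_s)^2\, ds\bigr) < \infty$ for every $T>0$. Since $s^{2l}$ is bounded on $[0,T]$, this reduces to showing $\int_0^T \mathbb{E}\bigl(p(\tilde X_s)^2\bigr)\, ds < \infty$, and since $p^2$ is again a polynomial, it is in turn enough to control $\mathbb{E}(|\tilde X_s^i|^m)$, i.e.\ to establish polynomial moment bounds for the coordinates of the Bessel process that are locally integrable in $s$ near $0$.

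The key steps, in order, are as follows. First, I would recall from \eqref{density-A-0} (rescaled by $1/\sqrt\beta$) that for $\tilde X_0 = 0$ the random vector $\tilde X_s$ has, up to a constant, the explicit density proportional to $e^{-\|y\|^2/(2s/\beta)} w_\beta(y)$ on $C_N^A$; a Gaussian-type tail then gives $\mathbb{E}(\|\tilde X_s\|^m) \le C_m\, (s/\beta)^{m/2}$ for every $m$, which is not only finite but vanishes as $s\to 0$, so the time integral converges. For a general starting point $x$, one uses instead the semigroup representation \eqref{density-general}: the comparison $J_k(x/\sqrt t, y/\sqrt t) \le e^{\langle x,y\rangle/t}$ (a standard bound for multivariate Bessel functions, or alternatively the fact that $\tilde X_s - \tilde X_0$ is stochastically dominated in an appropriate sense) yields Gaussian-type moment bounds $\mathbb{E}(\|\tilde X_s\|^m \mid \tilde X_0 = x) \le C_{m}(1 + \|x\|^m + (s/\beta)^{m/2})$. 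Second, having such a bound, $\int_0^T \mathbb{E}(p(\tilde X_s)^2)\,ds \le \int_0^T C(1 + s^{q})\,ds < \infty$ for the relevant exponent $q$, so the Itô isometry applies and $M_t$ is an $L^2$-bounded-on-compacts martingale. Third, I would note the hypothesis $\beta \ge 1/2$ is not actually needed here — by Theorem \ref{SDE-basic} a unique strong solution exists for all $\beta > 0$ (and the moment estimate holds regardless of whether the process touches $\partial C_N^A$) — which is consistent with the lemma's stated range $\beta \ge 0$, where $\beta = 0$ is the Brownian case and the claim is classical.

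The main obstacle is the moment bound for $\tilde X_s$ starting from a general $x \in C_N^A$: one must handle the drift term $\beta\sum_{j\ne i}(X^i - X^j)^{-1}$, which is singular on the boundary, and show it does not destroy integrability. The cleanest route avoids a direct SDE estimate and instead invokes the explicit transition kernel \eqref{density-general} together with the pointwise bound $0 < J_k(u,v) \le e^{\langle u, v\rangle}$ for $u,v \in \mathbb{R}^N$ (which holds because $J_k$ is a mean of exponentials $e^{\langle u, g v\rangle}$ over the Weyl group up to the Harish-Chandra-type integral representation, each term bounded by $e^{\|u\|\|v\|}$; since we only need \emph{some} polynomial-in-$\|x\|$ bound, even the crude estimate $J_k(u,v)\le e^{\|u\|\cdot\|v\|}$ suffices). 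Completing the square in the resulting Gaussian integral then produces the desired bound $\mathbb{E}(\|\tilde X_s\|^m\mid \tilde X_0 = x)<\infty$ with at most polynomial growth in $\|x\|$ and $s$, after which the martingale property follows from the Itô isometry as above. An even softer alternative, if one wishes to cite rather than compute: the processes $(\tilde X_t)$ are Feller diffusions with the stated generator, hence $(e_1^N(\tilde X_t) - \text{drift})$ is a martingale by Theorem \ref{SDE-basic} applied to $f = e_1^N$, and iterating Itô's formula on powers shows all polynomial moments are finite and locally bounded in $t$; this again yields the $L^2$ criterion and hence the lemma.
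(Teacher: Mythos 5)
Your overall strategy is the same as the paper's: reduce the martingale property of the stochastic integral to the $L^2$-criterion $\mathbb{E}\bigl(\int_0^T s^{2l}p(\tilde X_s)^2\,ds\bigr)<\infty$, which in turn only requires locally integrable polynomial moment bounds for $\tilde X_s$. Where you diverge is in how you obtain those bounds. You go through the explicit transition kernel \eqref{density-general} and the pointwise estimate $J_k(u,v)\le e^{\|u\|\,\|v\|}$ (which is indeed a standard consequence of the positivity of the Dunkl intertwining operator, so this step is sound, if somewhat heavier than needed), then complete the square in the Gaussian integral. The paper instead uses a single soft observation: $\bigl(\sum_{i=1}^N X_{t,i}^2\bigr)_{t\ge0}$ is a classical one-dimensional squared Bessel process (of dimension $N+2\gamma_k$; see \cite{RV1}), so all of its powers are square-integrable with respect to $\mathcal{P}\otimes\lambda|_{[0,t]}$ for every $t$, uniformly handling arbitrary starting points and all $\beta\ge0$ in one line, after which the claim follows from the construction of the It\^o integral. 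Your route buys explicitness (you actually get the quantitative bound $\mathbb{E}(\|\tilde X_s\|^m\mid \tilde X_0=x)\le C_m(1+\|x\|^m+(s/\beta)^{m/2})$, which could be reused elsewhere) at the cost of importing a nontrivial estimate on the multivariate Bessel function; the paper's route is shorter and avoids any analysis of $J_k$. Your closing ``softer alternative'' of iterating It\^o's formula on powers is essentially a hands-on version of the same radial-part idea and would also work, though as stated it is circular in flavor (you would be using the martingale property of stochastic integrals of polynomials to prove moment bounds needed to justify that very martingale property), so it would need to be organized as an induction on the degree with a localization argument. Both proofs are correct; yours is simply the computational rather than the structural one.
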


\begin{proof} The process $(\sum_{i=1}^N  X_{t,i}^2)_{t\ge0}$ is a classical one-dimensional squared Bessel process
(see e.g. \cite{RV1}), i.e., all powers of this process are square-integrable w.r.t.~the measure
 ${\cal P}\otimes\lambda|_{[0,t]}$ on $\Omega\times[0,t]$ for each $t>0$, the probability measure ${\cal P}$
 on the underlying probability space $\Omega$, and the Lebesgue measure $\lambda$.
The lemma is now clear by the very construction of the It\^{o} integral.
\end{proof}

Using Lemmas \ref{symmetric-pol-in-t}   and \ref{Brownian-martingale}, the following martingales can be constructed
for Bessel processes via elementary symmetric polynomials:

\begin{proposition}\label{elementary-symm-martingale}
For all $\beta>0$, $k=1,\ldots,N$, and all starting points $x_0\in C_N^A$ of $(\tilde X_{t,\beta})_{t\ge0}$,
 the process
\begin{equation}\label{mart-formel-a}
\Biggl( e^N_k(\tilde X_{t}) + \sum_{l=1}^{\lfloor k/2\rfloor} \frac{(N-k+2l)!}{2^l\cdot l!\cdot (N-k)!} 
t^l\cdot e^N_{k-2l}(\tilde X_{t})\Biggr)_{t\ge0}
\end{equation}
is a martingale.
\end{proposition}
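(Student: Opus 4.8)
The plan is to define, for each $k=1,\dots,N$, the candidate process
\[
M^{(k)}_t := e^N_k(\tilde X_{t}) + \sum_{l=1}^{\lfloor k/2\rfloor} c_{k,l}\, t^l\cdot e^N_{k-2l}(\tilde X_{t}),
\qquad c_{k,l}:=\frac{(N-k+2l)!}{2^l\, l!\, (N-k)!},
\]
and to show $dM^{(k)}_t$ has vanishing drift, so that $M^{(k)}$ is a local martingale; Lemma~\ref{Brownian-martingale} then upgrades it to a genuine martingale since each term is $\int_0^t s^{l'}p(\tilde X_s)\,dB_{s,i}$ after integration. First I would apply Lemma~\ref{symmetric-pol-in-t} to each summand $t^l e^N_{k-2l}(\tilde X_t)$: writing $j$-index $= k-2l$ in that lemma (valid for $k-2l\ge 2$, with the cases $k-2l\in\{0,1\}$ handled separately since $e^N_0=1$ and $e^N_1$ is a drift-free martingale by itself after the $\frac{1}{\sqrt\beta}\sum dB$ term), the drift contribution of $t^l e^N_{k-2l}(\tilde X_t)$ is
\[
\Bigl( l\, t^{l-1} e^N_{k-2l}(\tilde X_t) - \tfrac{1}{2} t^l (N-k+2l+2)(N-k+2l+1)\, e^N_{k-2l-2}(\tilde X_t)\Bigr)dt .
\]

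The key step is then to collect, for each fixed power $t^{l-1}$ and each fixed elementary symmetric polynomial $e^N_{k-2l}(\tilde X_t)$, the two sources that feed it: the "$l\,t^{l-1}e^N_{k-2l}$" term coming from the summand indexed by $l$, and the "$-\tfrac12 t^{l-1}(\cdots)e^N_{k-2l}$" term coming from the summand indexed by $l-1$ (which carries $e^N_{k-2(l-1)-2}=e^N_{k-2l}$ and coefficient $c_{k,l-1}$, with $c_{k,0}:=1$ corresponding to the leading term $e^N_k$). Setting the total coefficient of $t^{l-1}e^N_{k-2l}(\tilde X_t)\,dt$ to zero gives the recursion
\[
l\, c_{k,l} \;=\; \tfrac12 (N-k+2l)(N-k+2l-1)\, c_{k,l-1}\qquad(1\le l\le \lfloor k/2\rfloor),
\]
and a direct check shows the stated $c_{k,l}=\dfrac{(N-k+2l)!}{2^l l! (N-k)!}$ satisfies it with $c_{k,0}=1$; one also checks the boundary term at $l=\lfloor k/2\rfloor$ produces at worst $e^N_{k-2\lfloor k/2\rfloor-2}$ which is $e^N_{-2}$ or $e^N_{-1}$, conventionally $0$, so nothing is left over. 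Hence the aggregate drift vanishes identically and only the Brownian part
\[
\frac{1}{\sqrt\beta}\sum_{l=0}^{\lfloor k/2\rfloor} c_{k,l}\, t^l \sum_{j=1}^N e^{N-1}_{k-2l-1}(\tilde X_{t,\{1,\dots,N\}\setminus\{j\}})\, dB_{t,j}
\]
survives, where $c_{k,0}=1$ and the $e^{N-1}_{-1}\equiv 0$ convention absorbs degenerate indices.

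Finally I would invoke Lemma~\ref{Brownian-martingale}: each stochastic integral above is of the form $\int_0^t s^l\, p(\tilde X_s)\,dB_{s,j}$ with $p=e^{N-1}_{k-2l-1}(\,\cdot\,_{\{1,\dots,N\}\setminus\{j\}})$ a polynomial in the $N$ coordinates, hence a true martingale; a finite linear combination of martingales is a martingale, and $M^{(k)}$ is such a combination plus the constant $M^{(k)}_0$, giving the claim for every starting point $x_0\in C_N^A$ (the hypothesis $\beta\ge 1/2$ from Lemma~\ref{symmetric-pol-in-t} ensures $\tilde X_t\in\mathrm{int}\,C_N^A$ so the SDE coefficients are defined along the path, while the statement is asserted for $\beta>0$, so a brief remark reconciling this — e.g. that the drift identity is algebraic in the $\tilde X_{t,i}$ and holds wherever the diffusion is well-defined — is in order). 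The main obstacle is purely bookkeeping: carefully matching which $l$-indexed summand feeds which $t$-power/$e^N$-pair, getting the shifted factorial arguments $(N-k+2l)(N-k+2l-1)$ right (note the $k$ in Lemma~\ref{symmetric-pol-in-t} is being specialized to $k-2l$, so its "$(N-k+2)(N-k+1)$" becomes "$(N-k+2l)(N-k+2l-1)$" after substitution), and verifying the closed form solves the recursion — there is no analytic difficulty, only the risk of an off-by-one in the index arithmetic.
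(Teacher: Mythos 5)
Your drift computation for $\beta\ge 1/2$ is correct and is essentially the paper's argument reorganized: where the paper builds the martingale term by term from Lemma \ref{symmetric-pol-in-t} and then identifies the leftover integral $\int_0^t s^{\lfloor k/2\rfloor-1}e^N_{k-2\lfloor k/2\rfloor}(\tilde X_s)\,ds$ with the last summand (splitting into $k$ even, where $e^N_0=1$, and $k$ odd, where an integration by parts against $e^N_1(\tilde X_s)=e^N_1(x_0)+\beta^{-1/2}\sum_i B_{s,i}$ is needed), you collect the coefficient of each $t^{l-1}e^N_{k-2l}\,dt$ and verify the recursion $l\,c_{k,l}=\tfrac12(N-k+2l)(N-k+2l-1)c_{k,l-1}$. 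The index shift, the recursion, the boundary conventions $e^N_{-1}=e^N_{-2}=0$, and the appeal to Lemma \ref{Brownian-martingale} to upgrade the local martingale to a true one are all right.

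The genuine gap is the passage from $\beta\ge 1/2$ to all $\beta>0$, which you flag but do not close. For $0<\beta<1/2$ the process hits $\partial C_N^A$ with positive probability (Theorem \ref{SDE-basic} only excludes collisions for $\beta\ge 1/2$), the drift coefficients $1/(\tilde X_{t,i}-\tilde X_{t,j})$ are singular there, and the It\^o computation underlying Lemma \ref{symmetric-pol-in-t} is no longer valid pathwise; this is precisely why that lemma carries the hypothesis $\beta\ge 1/2$. Your proposed remark --- that ``the drift identity is algebraic and holds wherever the diffusion is well-defined'' --- only addresses the identity, not why the martingale property survives for a process that visits the singular set. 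The paper's resolution is different and does need to be spelled out: one reads the drift cancellation as the space-time harmonicity $(\partial_t+\tilde\Delta_\beta)f_{N,k}\equiv 0$ of the fixed ($\beta$-independent) polynomial $f_{N,k}$, observes that the left-hand side depends analytically on $\beta$ so the identity established for $\beta\ge 1/2$ extends to all $\beta>0$, and then applies Dynkin's formula for the Feller semigroup of the Bessel process (which is defined for every $\beta>0$ via the transition kernels \eqref{density-general}, independently of the SDE) to recover the martingale property. Without some argument of this kind your proof only establishes the proposition for $\beta\ge 1/2$.
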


\begin{proof} 
First assume  $\beta\ge1/2$. For $k=1$ we obtain from the SDE (\ref{SDE-A-normalized}) that
\begin{equation}\label{e-1-brownian}
 e^N_{1}(\tilde X_{s})=\sum_{i=1}^N\tilde X_{s,i} 
=\frac{1}{\sqrt \beta}\sum_{i=1}^N B_{s,i} + e^N_1(x_0)\end{equation}
which proves the proposition for $k=1$. In general,
it follows  from  Lemmas \ref{symmetric-pol-in-t}   and \ref{Brownian-martingale} that
\begin{align}\label{mart-long}
\Biggl( e^N_k(\tilde X_{t}) +& \sum_{l=1}^{\lfloor k/2\rfloor-1} \frac{(N-k+2l)!}{2^l\cdot l!\cdot (N-k)!} 
t^l\cdot e^N_{k-2l}(\tilde X_{t}) \\& + \frac{(N-k+2 \lfloor k/2\rfloor  )!}{2^{\lfloor k/2\rfloor }\cdot
 (\lfloor k/2\rfloor-1)!\cdot (N-k)!} \int_0^ts^{\lfloor k/2\rfloor-1 }\cdot e^N_{k-2\lfloor k/2\rfloor}(\tilde X_{s}) \> ds
\Biggr)_{t\ge0}
\notag\end{align}
is  a martingale. We thus only have to compare the last term on the RHS of (\ref{mart-long}) with the summand
 $l=\lfloor k/2\rfloor$  on the RHS of (\ref{mart-formel-a}).
Here the arguments are different for even and odd $k$.

If $k$ is even, then $ e^N_{k-2\lfloor k/2\rfloor}=e^N_0=1$ and
$\int_0^ts^{\lfloor k/2\rfloor-1 } \> ds=\frac {1}{\lfloor k/2\rfloor } t^{\lfloor k/2\rfloor }.$
This shows that the last term  on the RHS of (\ref{mart-long})
 is the last summand for $l= \lfloor k/2\rfloor$ in (\ref{mart-formel-a}).
 This  yields the claim in the even case.

Now let $k\ge3$ be odd. Here $e^N_{k-2\lfloor k/2\rfloor}=e^N_1$, and we
 obtain from the SDE (\ref{SDE-A-normalized}) and It\^{o}'s formula that
\begin{align} 
t^{\lfloor k/2\rfloor}& e^N_{k-2\lfloor k/2\rfloor}(\tilde X_{t}) =
 t^{\lfloor k/2\rfloor} \sum_{i=1}^N\tilde X_{t,i} = 
\frac{ t^{\lfloor k/2\rfloor}}{\sqrt \beta}\sum_{i=1}^N B_{t,i} \> +\>  t^{\lfloor k/2\rfloor}e_1^N(x_0)\notag
\\
&=\frac{1}{\sqrt \beta}\int_0^t s^{\lfloor k/2\rfloor }\> d\Bigl( \sum_{i=1}^N B_{s,i}\Bigr) 
+\frac{ \lfloor k/2\rfloor}{\sqrt \beta} \int_0^t\Bigl( \sum_{i=1}^N B_{s,i}\Bigr)s^{\lfloor k/2\rfloor-1 } \> ds
+ t^{\lfloor k/2\rfloor}e^N_1(x_0)\notag
\\
&=\frac{1}{\sqrt \beta}\int_0^t s^{\lfloor k/2\rfloor }\> d\Bigl( \sum_{i=1}^N B_{s,i}\Bigr) \notag\\
&\quad\quad\quad
+ \lfloor k/2\rfloor \int_0^t(e^N_{1}(\tilde X_{s})-e^N_1(x_0))s^{\lfloor k/2\rfloor-1 }
 \> ds
+ t^{\lfloor k/2\rfloor}e^N_1(x_0)\notag
\\
&=\frac{1}{\sqrt \beta}\int_0^t s^{\lfloor k/2\rfloor }\> d\Bigl( \sum_{i=1}^N B_{s,i}\Bigr) +
 \lfloor k/2\rfloor \int_0^te^N_{1}(\tilde X_{s})s^{\lfloor k/2\rfloor-1 }
 \> ds.
\notag\end{align}
This and  (\ref{mart-long}) yield the proposition for $k\ge3$ odd.
In summary, the proposition holds for $\beta\ge1/2$.

We now use Dynkin's formula (see e.g. Section III.10 of \cite{RW}) which implies that
 the symmetric functions
 $$  f_{N,k}:C_N^A\times [0,\infty[\to \mathbb R, \quad (x,t)\mapsto
 e^N_k(x) + \sum_{l=1}^{\lfloor k/2\rfloor} \frac{(N-k+2l)!}{2^l\cdot l!\cdot (N-k)!} 
t^l\cdot e^N_{k-2l}(x)$$ 
are space-time-harmonic w.r.t. the generators
$$\tilde\Delta_\beta f= \frac{1}{2\sqrt\beta}\Delta f +
 \sum_{i=1}^N\Bigl( \sum_{j\ne i} \frac{1}{x_i-x_j}\Bigr) \frac{\partial}{\partial x_i}f$$
 of 
 the diffusions $(\tilde X_t)_{t\ge0}$   for $\beta\ge1/2$, i.e., we have
\begin{equation}\label{diff-op-space-time}
(\frac{\partial}{\partial t}+ \tilde\Delta_\beta)f_{N,k}\equiv 0.
\end{equation}
 As the left hand side of (\ref{diff-op-space-time}) is analytic in $\beta$,
  analytic continuation shows that $f_{N,k}$ is  space-time-harmonic
 also  for all  $\beta>0$. Dynkin's formula  now yields the
 proposition in general.
\end{proof}

Notice that the functions $f_{N,k}$  do
 not depend on $\beta>0$, and that the simultanous space-time harmonicity w.r.t.~ all $\tilde\Delta_\beta$
 ($\beta>0$) is trivial, as $\beta$ only appears as a factor of the classical Laplacian $\Delta$,
 for which obviously $\Delta e^N_k\equiv$ holds for all $k$.

We also point out that
 Lemma \ref{symmetric-pol-in-t} and
 Proposition \ref{elementary-symm-martingale} remain valid for $\beta=\infty$, in which case the
 SDE (\ref{SDE-A-normalized}) is an ODE, and the process $(\tilde X_{t}^\infty)_{t\ge0}$ is deterministic
 whenever so is the initial condition for $t=0$. There are several limit theorems 
(laws of large numbers, CLTs) for the limit transition $\beta\to\infty$; see \cite{AKM1, AV, VW}. 
  Proposition \ref{elementary-symm-martingale} for $\beta\in]0,\infty]$ leads to:

\begin{corollary}\label{constant-expectation}
 Let $(\tilde X_{t}^\beta)_{t\ge0}$ be a normalized Bessel process for $\beta\in]0,\infty]$
which starts in some  $x_0\in C_N^A$.
Then for $k=0,1,\ldots,N$ and $t\ge0$,  the expectations
$\mathbb E( e_k^N(\tilde X_{t}^\beta) )$ do not depend on
 $\beta$.
\end{corollary}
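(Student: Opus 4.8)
The plan is to deduce Corollary~\ref{constant-expectation} directly from Proposition~\ref{elementary-symm-martingale}. Fix $\beta\in\,]0,\infty]$ and $t\ge0$. For a given index $k\in\{0,1,\ldots,N\}$, I would argue by (finite) induction on $k$, using the explicit martingales
$$M_k(t):=e^N_k(\tilde X_{t}) + \sum_{l=1}^{\lfloor k/2\rfloor} \frac{(N-k+2l)!}{2^l\cdot l!\cdot (N-k)!} t^l\cdot e^N_{k-2l}(\tilde X_{t})$$
from \eqref{mart-formel-a}. Taking expectations and using the martingale property (together with $M_k(0)=f_{N,k}(x_0,0)=e^N_k(x_0)$, which is deterministic and visibly $\beta$-independent), one gets
$$\mathbb E\bigl(e^N_k(\tilde X_{t}^\beta)\bigr) = e^N_k(x_0) - \sum_{l=1}^{\lfloor k/2\rfloor} \frac{(N-k+2l)!}{2^l\cdot l!\cdot (N-k)!}\, t^l\cdot \mathbb E\bigl(e^N_{k-2l}(\tilde X_{t}^\beta)\bigr).$$
Each term $\mathbb E(e^N_{k-2l}(\tilde X_{t}^\beta))$ on the right has index $k-2l<k$, so by the induction hypothesis it is independent of $\beta$; since the combinatorial coefficients and the powers $t^l$ do not involve $\beta$ either, the whole right-hand side is $\beta$-independent, hence so is $\mathbb E(e^N_k(\tilde X_{t}^\beta))$. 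The base cases $k=0$ (where $e^N_0\equiv1$, expectation $1$) and $k=1$ (where \eqref{e-1-brownian} gives $\mathbb E(e^N_1(\tilde X_{t}^\beta))=e^N_1(x_0)$) are immediate.

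Before carrying out the induction I should check integrability, so that taking expectations in the martingale identity is legitimate and the recursion is between finite quantities: each $e^N_k(\tilde X_{t}^\beta)$ is a polynomial in the coordinates $\tilde X_{t,i}^\beta$, and these have moments of all orders because, as noted in the proof of Lemma~\ref{Brownian-martingale}, $\sum_i (X_{t,i}^\beta)^2$ is a squared Bessel process with all moments finite; dividing by $\sqrt\beta$ (or, for $\beta=\infty$, noting the process is deterministic) preserves this. The case $\beta=\infty$ is covered by the remark preceding the corollary, which states that Proposition~\ref{elementary-symm-martingale} remains valid there.

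I do not anticipate a serious obstacle: the corollary is essentially a bookkeeping consequence of the triangular (in $k$) structure of the martingales in Proposition~\ref{elementary-symm-martingale}, in which $\beta$ has already been eliminated at the level of the space-time-harmonic functions $f_{N,k}$. The only mildly delicate point is making sure the induction is set up cleanly — one inducts over $k$ with step $2$ within each parity class, or simply over all $k$ at once since $M_k$ only couples $e^N_k$ to lower-index terms of the same parity. A cosmetically even shorter route is to observe that $\mathbb E(f_{N,k}(\tilde X_t^\beta,t))=f_{N,k}(x_0,0)$ for every $k$, so the vector $\bigl(\mathbb E(e^N_k(\tilde X_t^\beta))\bigr)_{k=0}^N$ is the image of the fixed vector $\bigl(f_{N,k}(x_0,0)\bigr)_k$ under the inverse of a fixed unitriangular matrix (whose entries are the $\beta$-free coefficients $\tfrac{(N-k+2l)!}{2^l l!(N-k)!}t^l$), hence is itself $\beta$-free; I would present whichever of these two formulations reads most smoothly.
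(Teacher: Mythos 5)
Your proof is correct and follows essentially the same route as the paper: take expectations in the martingales of Proposition~\ref{elementary-symm-martingale}, note that the time-zero value $e^N_k(x_0)$ is $\beta$-free, and induct on $k$ (the paper's own proof is exactly this, stated more tersely). Your base case $k=1$ is in fact more accurate than the paper's, which asserts $\mathbb E(e_1^N(\tilde X_t^\beta))=0$ where \eqref{e-1-brownian} actually gives the ($\beta$-independent) value $e_1^N(x_0)$ for a general starting point.
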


\begin{proof} 
The the case $k=0$ is trivial, and, by (\ref{e-1-brownian}),
 $\mathbb E( e_1^N(\tilde X_{t}^\beta) )=0$, which proves the result for $k=1$.
 Proposition \ref{elementary-symm-martingale} and induction now lead to the general case. 
\end{proof}

We now study the case when the process is initially in the origin. For $\beta=\infty$, the solution of the
 ODE (\ref{SDE-A-normalized}) can be expressed via the ordered zeros $z_1> z_2>\ldots>z_N$ of
 the Hermite polynomial 
\begin{equation}\label{def-hermite}
H_N(x)= \sum_{k=0}^{\lfloor k/2\rfloor} (-1)^k \frac{N!}{k! \> (N-2k)!} 2^{N-2k}x^{N-2k}
\end{equation}
 where the Hermite polynomials $(H_N)_{N\ge 0}$ are orthogonal w.r.t.
 the density  $e^{-x^2}$; see  \cite{S} for details. We have the following result by \cite{AV}
 which follows easily from Section 6.7 of \cite{S} on the zeros of $H_N$:

\begin{lemma}\label{special-solution}
The solution of the
 ODE (\ref{SDE-A-normalized}) with $\beta=\infty$ and start in $0\in C_N^A$
is given by $\tilde X_{t}^\infty= \sqrt{2t}\cdot {z} $ with $z:=(z_1,\ldots,z_N)$.
\end{lemma}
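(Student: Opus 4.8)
The plan is to verify directly that the claimed trajectory $\tilde X_t^\infty=\sqrt{2t}\,z$ solves the ODE
\begin{equation*}
\frac{d}{dt}\tilde X_{t,i}=\sum_{j\ne i}\frac{1}{\tilde X_{t,i}-\tilde X_{t,j}},\qquad \tilde X_0=0,
\end{equation*}
which is the $\beta=\infty$ specialization of (\ref{SDE-A-normalized}) (the Brownian term drops out). Since the drift is singular on $\partial C_N^A$ but the solution should instantly enter the interior, I would first settle well-posedness: on the open chamber the right-hand side is locally Lipschitz, so a solution is unique there, and the only subtlety is the behavior at $t=0$. I would handle this by noting that any candidate of the self-similar form $\tilde X_t=\sqrt{2t}\,v$ reduces the ODE to an algebraic system for $v$, and then argue (or cite \cite{AV}) that this self-similar solution is the one selected by the initial condition $0$, e.g.\ by the standard approximation of Bessel-type processes/ODEs started at the boundary, or simply by observing it is a continuous solution on $[0,\infty)$ matching the start and invoking uniqueness away from $0$ together with continuity.

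The core computation is the reduction to the Hermite zero equations. Plugging $\tilde X_{t,i}=\sqrt{2t}\,z_i$ into the ODE, the left side is $\frac{d}{dt}(\sqrt{2t}\,z_i)=\frac{z_i}{\sqrt{2t}}$, while the right side is $\sum_{j\ne i}\frac{1}{\sqrt{2t}(z_i-z_j)}=\frac{1}{\sqrt{2t}}\sum_{j\ne i}\frac{1}{z_i-z_j}$. Cancelling the common factor $1/\sqrt{2t}$ (valid for $t>0$), the ODE holds for all $t>0$ if and only if
\begin{equation*}
z_i=\sum_{j\ne i}\frac{1}{z_i-z_j}\qquad(i=1,\ldots,N).
\end{equation*}
This is exactly the classical electrostatic characterization of the zeros of $H_N$ coming from $H_N''(x)-2xH_N'(x)+2NH_N(x)=0$: evaluating at a simple zero $z_i$ gives $H_N''(z_i)=2z_i H_N'(z_i)$, and since $H_N''(z_i)/H_N'(z_i)=2\sum_{j\ne i}1/(z_i-z_j)$ for a polynomial with simple zeros, one obtains the displayed relations. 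I would cite Section 6.7 of \cite{S} for the simplicity of the zeros and this identity, so no new work is needed here.

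Finally I would check the ordering and the starting condition: the ordered zeros $z_1>z_2>\cdots>z_N$ give $\tilde X_{t,1}^\infty>\cdots>\tilde X_{t,N}^\infty$ for $t>0$, so the trajectory stays in the interior of $C_N^A$, consistent with the $\beta=\infty$ analogue of the last statement of Theorem \ref{SDE-basic}; and $\sqrt{2t}\,z\to 0$ as $t\downarrow 0$, so the initial condition is met. The main obstacle, such as it is, is not the algebra but the well-posedness at the singular starting point $0$: one must justify that the ODE started at the boundary point $0$ has a (unique) solution and that it coincides with the self-similar one. I would resolve this by the scaling argument above combined with uniqueness in the interior, or by simply deferring to \cite{AV} where this selection is established; everything else is the short Hermite computation displayed above.
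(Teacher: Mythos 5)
Your verification is correct and matches the route the paper itself indicates: the paper gives no proof, merely asserting that the lemma "follows easily from Section 6.7 of \cite{S} on the zeros of $H_N$" and citing \cite{AV}, which is precisely your reduction of the self-similar ansatz $\sqrt{2t}\,z$ to the electrostatic identities $z_i=\sum_{j\ne i}(z_i-z_j)^{-1}$ obtained from $H_N''-2xH_N'+2NH_N=0$. Your additional care about uniqueness at the singular starting point $0$ (deferred to \cite{AV}) is appropriate and consistent with the paper's treatment.
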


This and the preceding results have the following consequence:

\begin{corollary}\label{det-formula} For $\beta\in]0,\infty[$ let
$( X_{t,\beta})_{t\ge0}$ be the Bessel process of type A with  start in $0$.
Then,
\begin{align}\label{det-form-1}
\mathbb E\bigl(\prod_{i=1}^N (y- X_{t,i}^\beta)\bigr) & =  
\nonumber  c_\beta^A \int_{\mathbb R^N}\big( \prod_{i=1}^N (y-x_i)\big)\cdot  t^{-\gamma_\beta^A- N/2 } \mathrm{e}^{-\|x\|^2/(2t)} \prod_{i<j} (x_i-x_j)^{2\beta}\ dx \\
 &  = (t\beta/2)^{N/2}\cdot H_N(y/ \sqrt{2\beta t})
  \quad\quad\text{for}\quad y\in \mathbb R.
\end{align}
Moreover, for $k=0,1,\ldots,\lfloor k/2\rfloor-1$, $\mathbb E\bigl(e^N_{2k+1}(X_{t}^\beta)\bigr)=0$, and
\begin{equation}\label{det-form-2}
 \mathbb E\bigl(e_{2k}^N(X_{t}^\beta)\bigr)=   (t\beta/2)^k \frac{N!}{k! \> (N-2k)!} 
\quad\quad(k=0,1,\ldots,\lfloor k/2\rfloor).
\end{equation}
\end{corollary}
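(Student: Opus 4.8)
The plan is to reduce the whole statement to the deterministic case $\beta=\infty$, in which the process is explicitly given by the zeros of $H_N$. The first equality in (\ref{det-form-1}) is merely the definition of the expectation: by (\ref{density-A-0}) the vector $X_t^\beta$ started in $0$ has Lebesgue density proportional to $\mathrm e^{-\|y\|^2/(2t)}\prod_{i<j}(y_i-y_j)^{2\beta}$ on $C_N^A$, and since $\prod_i(y-x_i)$, the weight $\prod_{i<j}(x_i-x_j)^{2\beta}$, and the Gaussian factor are all invariant under permutations of the coordinates, one symmetrizes the integral to all of $\mathbb R^N$, the resulting factor $N!$ being absorbed into the normalizing constant $c_\beta^A$.

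For the second equality, which is the heart of the matter, I would use (\ref{symmetric-poly}) with $x=X_t^\beta$ to write
\[
\prod_{i=1}^N(y-X_{t,i}^\beta)=\sum_{m=0}^N(-1)^{N-m}e_{N-m}^N(X_t^\beta)\,y^m
\]
and take expectations, so that it remains to compute $\mathbb E\big(e_j^N(X_t^\beta)\big)$ for $j=0,\dots,N$. Since $X_t^\beta=\sqrt\beta\,\tilde X_t^\beta$ and $e_j^N$ is homogeneous of degree $j$, one has $\mathbb E\big(e_j^N(X_t^\beta)\big)=\beta^{j/2}\,\mathbb E\big(e_j^N(\tilde X_t^\beta)\big)$. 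By Corollary \ref{constant-expectation} this last expectation does not depend on $\beta\in\,]0,\infty]$, so I may evaluate it at $\beta=\infty$, where by Lemma \ref{special-solution} the process is deterministic with $\tilde X_t^\infty=\sqrt{2t}\,z$ and $z=(z_1,\dots,z_N)$ the ordered zeros of $H_N$; hence $\mathbb E\big(e_j^N(\tilde X_t^\beta)\big)=(2t)^{j/2}\,e_j^N(z)$.

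It then only remains to identify the $e_j^N(z)$. As $H_N$ has leading coefficient $2^N$, the monic polynomial with the roots $z_1,\dots,z_N$ is $2^{-N}H_N$, i.e.\ $\prod_{i=1}^N(y-z_i)=2^{-N}H_N(y)$; comparing this identity with (\ref{symmetric-poly}) and the explicit expansion (\ref{def-hermite}) of $H_N$ gives $e_{2k+1}^N(z)=0$ (which is also immediate from the symmetry $z\mapsto-z$ of the zero set) and $e_{2k}^N(z)=(-1)^k\frac{N!}{k!\,(N-2k)!}\,2^{-2k}$. Substituting these values back into the expansion above and collecting powers of $y$ yields simultaneously the closed form $(t\beta/2)^{N/2}H_N\!\big(y/\sqrt{2\beta t}\big)$ in (\ref{det-form-1}) and the moment identities (\ref{det-form-2}), since the factors $\beta^{j/2}$, $(2t)^{j/2}$ and $2^{-2k}$ recombine precisely into the argument $y/\sqrt{2\beta t}$ and the prefactor $(t\beta/2)^{N/2}$.

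I do not expect any genuinely hard step to be left: the two substantial ingredients have already been established, namely the $\beta$-independence of the moments (Corollary \ref{constant-expectation}, via the martingale property of Proposition \ref{elementary-symm-martingale} and analytic continuation in $\beta$) and the explicit description of the $\beta=\infty$ limit through the zeros of $H_N$ (Lemma \ref{special-solution}). The one point that needs care is the bookkeeping of the three rescalings $X_t^\beta=\sqrt\beta\,\tilde X_t^\beta$, $\tilde X_t^\infty=\sqrt{2t}\,z$ and the normalization $2^{-N}$ of $H_N$, together with the matching of the alternating signs in (\ref{symmetric-poly}) against those in (\ref{def-hermite}), so that the powers of $2$, $t$, $\beta$ and the sign $(-1)^k$ in (\ref{det-form-2}) come out correctly.
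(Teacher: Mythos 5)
Your proposal is correct and follows essentially the same route as the paper's own proof: expand $\prod_i(y-X_{t,i}^\beta)$ in elementary symmetric polynomials, use the homogeneity rescaling $X_t^\beta=\sqrt\beta\,\tilde X_t^\beta$, invoke the $\beta$-independence of Corollary \ref{constant-expectation} to pass to $\beta=\infty$, and identify $\tilde X_t^\infty=\sqrt{2t}\,z$ with the zeros of $H_N$ via its leading coefficient $2^N$. One small bookkeeping point: your (correct) value $e_{2k}^N(z)=(-1)^k\frac{N!}{k!\,(N-2k)!}2^{-2k}$ actually produces $\mathbb E\bigl(e_{2k}^N(X_t^\beta)\bigr)=(-1)^k(t\beta/2)^k\frac{N!}{k!\,(N-2k)!}$, so the sign $(-1)^k$ missing from (\ref{det-form-2}) as printed is a typo in the statement (e.g.\ $\mathbb E(X_{t,1}X_{t,2})=-t\beta$ for $N=2$), not a defect of your argument.
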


\begin{proof} Corollary \ref{constant-expectation} and Lemma \ref{special-solution} yield
\begin{align}\label{det-computation}
&\mathbb E\bigl(\prod_{i=1}^N (y- X_{t,i}^\beta)\bigr)=
\sum_{k=0}^N (-1)^k\mathbb E\bigl(e_k^N( X_{t}^\beta)\bigr)\cdot y^{N-k}\\
&=\beta^{N/2}\sum_{k=0}^N (-1)^k \mathbb E\bigl(e_k^N(\tilde X_{t}^\beta)\bigr) (y/\sqrt\beta)^{N-k}\notag\\
&=\beta^{N/2}\sum_{k=0}^N (-1)^k\mathbb E\bigl(e^N_k(\tilde X_{t}^\infty)\bigr)(y/\sqrt\beta)^{N-k}\notag\\
&=\beta^{N/2}\sum_{k=0}^N (-1)^k  e_k^N(\sqrt{2t}\cdot {z})(y/\sqrt\beta)^{N-k}\notag\\
&=(2t\beta)^{N/2}\sum_{k=0}^N (-1)^k e_k^N( {z})(y/\sqrt{2t\beta})^{N-k}
\notag\\
&=(2t\beta)^{N/2}\prod_{i=1}^N ( y/\sqrt{2t\beta}- z_i)
=(2t\beta)^{N/2}\frac{1}{2^N}\cdot  H_N(y/ \sqrt{2t\beta }) .
\notag\end{align}
This proves the first statement. The second one follows by a comparison of the coefficients in 
(\ref{def-hermite}) and (\ref{det-computation}).
\end{proof}

\begin{remark}
For  $\beta=1/2,1,2$ and start in $0$, the random variables $X_{t}^\beta$ have the same 
distributions as the ordered eigenvalues of a Gaussian orthogonal,
 unitary, or symplectic ensemble processes respectively
up to normalizations by (\ref{density-A-0}).
In this way, Corollary \ref{det-formula} can be restated for these ensembles. In particular, (\ref{det-form-1}) 
yields Proposition 11 of \cite{FG}
in the Gaussian unitary case; see also \cite{DG}. 
\end{remark}

\begin{remark} By \cite{AV}, the solution of the ODE (\ref{SDE-A-normalized}) with $\beta=\infty$ and start in $cz$, $c\geq 0$, is given by $\sqrt{2t +c^2}z$. For Bessel processes of type A with start in $cz$, this leads to
$$\mathbb E\bigl(\prod_{i=1}^N (y- X_{t,i}^\beta)\bigr) = ((2t+c^2)\beta/4)^{N/2}\cdot H_N(y/ \sqrt{(2t+c^2)\beta }).$$ 
\end{remark}

\begin{remark}
All preceding results are concerned with formulas which are invariant under the
 canonical action of the symmetric group $S_N$ on $\mathbb R^N$. We thus can replace the Bessel processes 
$( X_{t}^\beta)_{t\ge0}$  by Dunkl processes of type $A_{N-1}$
in Proposition \ref{elementary-symm-martingale} 
  and Corollaries \ref{constant-expectation} and \ref{det-formula}.
For the theory of Dunkl processes we refer to \cite{CGY}, \cite{GY}, \cite{RV1}, \cite{RV2}. 
 \end{remark}

\begin{remark} Corollary \ref{det-formula} is also valid for the case $\beta=0$. Here, the Dunkl process is an
$N$-dimensional Brownian motion, and the Bessel process a  Brownian motion on $C_N^A$
which is reflected on $\partial C_N^A$. We here have $\mathbb E(e^N_k(X_{t}^0))=0$ for $k\ge1$.
\end{remark}

\section{Bessel processes of type B} 

In this section we study Bessel processes 
for the root systems  $B_N$ with multiplicities $(k_1,k_2):=(\nu\cdot\beta, \beta)$
with parameters $\nu\ge0,\beta>0$.  These processes $( X_{t}:=(X_{t,1},\ldots,X_{t,N} ))_{t\ge0}$
have values on the closed Weyl chamber
$$C_N^B:=\{x\in \mathbb R^N: \quad x_1\ge x_2\ge\ldots\ge x_N\ge0\}$$
of type B and satisfy the  SDE 
\begin{equation}\label{SDE-B}
 dX_{t,i}^\beta = dB_{t,i}+ \beta \sum_{j\ne i} \Bigl(\frac{1}{X^\beta_{t,i}-X^\beta_{t,j}}+ \frac{1}{X^\beta_{t,i}+X^\beta_{t,j}}\Bigr)dt
+ \frac{\nu\cdot\beta}{X^\beta_{t,i}}dt\end{equation}
for $i=1,\ldots,N$
with an $N$-dimensional Brownian motion $(B_t)_{t\ge0}$
where the paths are reflected when they meet the boundary $\partial C_N^B$ of $C_N^B$.
Again, by Proposition 6.1 of \cite{CGY}, 
the process does not meet the boundary in positive time almost surely for $\beta\ge 1/2$ and $\nu\ge1$.

Again we  also  study the  renormalized processes
 $(\tilde X^\beta_{t}:=X_{t}/\sqrt \beta)_{t\ge0}$ with
\begin{equation}\label{SDE-B-normalized}
d\tilde X^\beta_{t,i} =\frac{1}{\sqrt \beta}dB_{t,i} + \sum_{j\ne i} 
 \Bigl( \frac{1}{\tilde X^\beta_{t,i}-\tilde X^\beta_{t,j}}
+ \frac{1}{\tilde X^\beta_{t,i}+\tilde X^\beta_{t,j}}\Bigr)dt +\frac{\nu}{\tilde X^\beta_{t,i}}dt
\end{equation}
for  $i=1,\ldots,N$ where we usually omit the parameter $\beta$ in $\tilde X_t^\beta$.
We now derive an analogue of Lemma \ref{symmetric-pol-in-t} which involves functions, which are invariant under
the Weyl group of type $B_N$. As in the proof of Lemma \ref{symmetric-pol-in-t}, let
$\tilde X_{t,S}$ be the $\mathbb R^{|S|}$-valued
 random vector with the 
coordinates $\tilde  X_{t,i}$ for $i\in S$ in the natural ordering on a subset $S\subset\{1,\ldots,N\}$.

\begin{lemma}\label{symmetric-pol-in-t-b}
 For all $\beta\ge 1/2$, $\nu\ge1$, $k=1,2,\ldots,N$, and $l\ge 0$
\begin{align}d(t^l\cdot e_k^N(\tilde X_{t}^2)&)=
\frac{2t^l}{\sqrt\beta}\sum_{j=1}^N\tilde X_{t,j}\cdot  e_{k-1}^{N-1}(\tilde X_{t,\{1,\ldots,N\}\setminus\{j\}}^2 )\>
 dB_{t,j}\notag\\
&+\Biggl(  lt^{l-1}\cdot e^N_k(\tilde X_{t}^2)
+2t^l (N-k+\nu+1/(2\beta))(N-k+1)e^N_{k-1}(\tilde X_{t}^2)\Biggr)dt.
\notag\end{align}
\end{lemma}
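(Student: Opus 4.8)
The plan is to mimic the proof of Lemma \ref{symmetric-pol-in-t}, replacing the variables $\tilde X_{t,i}$ by $\tilde X_{t,i}^2$ and keeping track of the extra It\^o correction coming from the quadratic map. First I would apply It\^o's formula to $t^l\cdot e_k^N(\tilde X_t^2)$. Since $e_k^N$ is applied to $\tilde X_t^2=(\tilde X_{t,1}^2,\ldots,\tilde X_{t,N}^2)$, the chain rule gives $\frac{\partial}{\partial x_j}e_k^N(x^2)=2x_j\,e_{k-1}^{N-1}(x_{\{1,\ldots,N\}\setminus\{j\}}^2)$, so the first-order term produces $2t^l\sum_j \tilde X_{t,j}\,e_{k-1}^{N-1}(\tilde X_{t,\{1,\ldots,N\}\setminus\{j\}}^2)\,d\tilde X_{t,j}$, plus the $lt^{l-1}e_k^N(\tilde X_t^2)\,dt$ term from differentiating $t^l$. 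The second-order term involves $\frac{\partial^2}{\partial x_j^2}e_k^N(x^2)=2e_{k-1}^{N-1}(x_{\ldots\setminus\{j\}}^2)+4x_j^2\,e_{k-2}^{N-2}(x_{\ldots\setminus\{j\}}^2)$; using $d\langle \tilde X_j\rangle_t=\frac1\beta\,dt$ this contributes $\frac{t^l}{2\beta}\sum_j\bigl(2e_{k-1}^{N-1}(\ldots)+4\tilde X_{t,j}^2 e_{k-2}^{N-2}(\ldots)\bigr)dt$.

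Next I would substitute the drift of the SDE (\ref{SDE-B-normalized}) into the $d\tilde X_{t,j}$ term. The Brownian part immediately yields the stated stochastic integral $\frac{2t^l}{\sqrt\beta}\sum_j \tilde X_{t,j}\,e_{k-1}^{N-1}(\tilde X_{t,\{1,\ldots,N\}\setminus\{j\}}^2)\,dB_{t,j}$. The drift part splits into three pieces: the type-$A$-like sum $\sum_{j\ne i}\frac{1}{\tilde X_{t,j}-\tilde X_{t,i}}$, the extra type-$B$ sum $\sum_{j\ne i}\frac{1}{\tilde X_{t,j}+\tilde X_{t,i}}$, and the $\nu/\tilde X_{t,j}$ term. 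The type-$A$ and type-$B$ sums should be combined by symmetrizing over $i\leftrightarrow j$: for the first, $\frac{2\tilde X_{t,j}\,e_{k-1}^{N-1}(\ldots\setminus\{j\})-2\tilde X_{t,i}\,e_{k-1}^{N-1}(\ldots\setminus\{i\})}{\tilde X_{t,j}-\tilde X_{t,i}}$ and for the second the analogous expression with $\tilde X_{t,j}+\tilde X_{t,i}$ in the denominator. The key algebraic identity I expect to need is an analogue of (\ref{elementary-symm-a-2})–(\ref{elementary-symm-a-3}) for squared variables: expanding $2\tilde X_{t,j}\,e_{k-1}^{N-1}(\tilde X_{t,\{\ldots\}\setminus\{j\}}^2)$ in terms of $\tilde X_{t,j}^2$ via the recursion $e_{k-1}^{N-1}(y_{\setminus\{j\}})=e_{k-1}^{N-2}(y_{\setminus\{i,j\}})+y_i e_{k-2}^{N-2}(y_{\setminus\{i,j\}})$ applied with $y=\tilde X_t^2$, and then doing the $i$–$j$ difference quotient, should collapse the $\pm$ denominators into polynomials in the $\tilde X_{t,\cdot}^2$ with no singularities. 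Combined with the counting identity (\ref{elementary-symm-a-3}) (and its variant for $e_{k-1}$), these sums should produce a multiple of $e_{k-1}^N(\tilde X_t^2)$ with combinatorial coefficient $(N-k+1)(N-k+?)$.

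After that I would handle the $\nu$-drift term: $\sum_j \frac{\nu}{\tilde X_{t,j}}\cdot 2\tilde X_{t,j}\,e_{k-1}^{N-1}(\tilde X_{t,\{\ldots\}\setminus\{j\}}^2)=2\nu\sum_j e_{k-1}^{N-1}(\tilde X_{t,\{\ldots\}\setminus\{j\}}^2)$, and by the elementary identity $\sum_j e_{k-1}^{N-1}(x_{\setminus\{j\}})=(N-k+1)e_{k-1}^N(x)$ this equals $2\nu(N-k+1)e_{k-1}^N(\tilde X_t^2)$. Note the beneficial cancellation: the $2\tilde X_{t,j}$ factor kills the $1/\tilde X_{t,j}$ singularity, which is why the result is a genuine polynomial identity and why $\nu$ enters linearly. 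Similarly, the It\^o $\frac{t^l}{2\beta}\cdot 2\sum_j e_{k-1}^{N-1}(\ldots)\,dt$ term gives $\frac{t^l}{2\beta}\cdot 2(N-k+1)e_{k-1}^N(\tilde X_t^2)\,dt$, which supplies precisely the $\frac{1}{2\beta}$ inside the factor $(N-k+\nu+1/(2\beta))$; the leftover $\frac{t^l}{2\beta}\cdot 4\sum_j\tilde X_{t,j}^2 e_{k-2}^{N-2}(\ldots)\,dt$ piece must merge with whatever the type-$A$/type-$B$ difference quotients leave over, and assembling all the $e_{k-1}^N(\tilde X_t^2)$ contributions with the correct total coefficient $2(N-k+\nu+1/(2\beta))(N-k+1)$ is the bookkeeping heart of the argument. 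Finally, since the resulting expression only contains a $dt$-drift in $e_{k-1}^N(\tilde X_t^2)$ and $e_k^N(\tilde X_t^2)$ (no negative powers, so no boundary issues beyond those already excluded for $\beta\ge1/2,\nu\ge1$), the displayed formula follows. The main obstacle I anticipate is the combinatorial identity reorganizing the mixed sum $\sum_{i\ne j}\bigl[\frac{1}{\tilde X_{t,j}-\tilde X_{t,i}}+\frac{1}{\tilde X_{t,j}+\tilde X_{t,i}}\bigr]\cdot 2\tilde X_{t,j}e_{k-1}^{N-1}(\tilde X_{t,\{\ldots\}\setminus\{j\}}^2)$ into the single clean term $2(N-k+1)^2 e_{k-1}^N(\tilde X_t^2)$ (up to the adjustments above); this is the type-$B$ analogue of (2.10)–(2.11) of \cite{VW} and requires careful symmetrization and the two-variable recursion for elementary symmetric polynomials, but it is purely algebraic.
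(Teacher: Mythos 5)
Your overall strategy---It\^o's formula for $t^l e_k^N(\tilde X_t^2)$, symmetrization of the singular drift sums over $i\leftrightarrow j$, and combinatorial identities for elementary symmetric polynomials---is exactly the route the paper takes. However, there is a concrete computational error at the heart of your bookkeeping: the second-order It\^o term. Since $e_k^N$ has degree at most one in each variable, $e_{k-1}^{N-1}(x^2_{\{1,\ldots,N\}\setminus\{j\}})$ does not depend on $x_j$ at all, so
\begin{equation*}
\frac{\partial^2}{\partial x_j^2}\, e_k^N(x^2)
=\frac{\partial}{\partial x_j}\Bigl(2x_j\, e_{k-1}^{N-1}(x^2_{\{1,\ldots,N\}\setminus\{j\}})\Bigr)
=2\, e_{k-1}^{N-1}(x^2_{\{1,\ldots,N\}\setminus\{j\}}),
\end{equation*}
with no additional $4x_j^2 e_{k-2}$ contribution. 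The spurious piece $\frac{2t^l}{\beta}\sum_j \tilde X_{t,j}^2\, e_{k-2}(\cdots)\,dt$ that you propose to ``merge with whatever the difference quotients leave over'' has nothing to cancel against: the difference quotients leave no remainder of that shape, so carrying your formula through would produce an extra drift term incompatible with the stated identity. Dropping it, the It\^o correction is exactly $\frac{t^l}{\beta}\sum_j e_{k-1}^{N-1}(\tilde X^2_{t,\{1,\ldots,N\}\setminus\{j\}})\,dt=\frac{t^l}{\beta}(N-k+1)e_{k-1}^N(\tilde X_t^2)\,dt$, which supplies the $1/(2\beta)$ inside $(N-k+\nu+1/(2\beta))$ as you correctly anticipated.

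The second gap is that you leave the coefficient of the symmetrized difference-quotient sum undetermined (``$(N-k+{?})$'', later guessed as $(N-k+1)^2$). The correct value is $(N-k+1)(N-k)$: using $\frac{1}{a-b}+\frac{1}{a+b}=\frac{2a}{a^2-b^2}$ and then the identity
$\tilde X_{t,j}^2\, e_{k-1}^{N-1}(\tilde X^2_{t,\{1,\ldots,N\}\setminus\{j\}})-\tilde X_{t,i}^2\, e_{k-1}^{N-1}(\tilde X^2_{t,\{1,\ldots,N\}\setminus\{i\}})=(\tilde X_{t,j}^2-\tilde X_{t,i}^2)\, e_{k-1}^{N-2}(\tilde X^2_{t,\{1,\ldots,N\}\setminus\{i,j\}})$
(note the index stays at $k-1$, unlike the drop to $k-2$ in the type-$A$ case), together with $\sum_{i\ne j} e_{k-1}^{N-2}(\tilde X^2_{t,\{1,\ldots,N\}\setminus\{i,j\}})=(N-k+1)(N-k)\,e_{k-1}^N(\tilde X_t^2)$, one gets the contribution $2t^l(N-k+1)(N-k)\,e_{k-1}^N(\tilde X_t^2)\,dt$. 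Only with this value does the total drift coefficient assemble to $2t^l(N-k+1)\bigl((N-k)+\nu+1/(2\beta)\bigr)$; your guess $(N-k+1)^2$ would yield $(N-k+1+\nu+1/(2\beta))$ instead. The Brownian part and the $\nu$-term (where $2\tilde X_{t,j}$ cancels $\nu/\tilde X_{t,j}$ and $\sum_j e_{k-1}^{N-1}=(N-k+1)e_{k-1}^N$) you handle correctly.
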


\begin{proof} As $d[\tilde X_{t,i},\tilde X_{t,j}]= \frac{\delta_{i,j}}{\beta}dt$ by (\ref{SDE-B-normalized}),
It\^{o}'s formula  implies that
\begin{align}d(t^l\cdot e_k^N(\tilde X_{t}^2))= &
lt^{l-1}\cdot e_k^N(\tilde X_{t}^2) \> dt+
t^l\sum_{j=1}^N 2\tilde X_{t,j}\cdot e_{k-1}^{N-1}(\tilde X_{t,\{1,\ldots,N\}\setminus\{j\}}^2)\> d\tilde X_{t,j}
\notag\\
& + \frac{t^l}{\beta}\sum_{j=1}^Ne_{k-1}^{N-1}(\tilde X_{t,\{1,\ldots,N\}\setminus\{j\}}^2)\> dt.\notag
\end{align}
\if 0
\textcolor{blue}{Just for double-checking: Ito's formula  \begin{align*}
	d(f(t,X_t)) = \partial_t f(t,X_t)dt + \sum_i(\partial_{x_i} f)(t,X_t)dX_{t,i} + \frac12 \sum_{i,j} (\partial_{x_i} \partial_{x_j} f)(t,X_t) d[X_i,X_j]_t 
\end{align*}
set $f(t,x)=t^le_k^N (x^2)$, then 
$$ \partial_{x_j} f(x)= 2x_j e_{k-1}^{N-1}(x_{\{1,\ldots,N\}\setminus{j}}^2) \quad \quad \partial_{x_j}^2 f(x) = 2 e_{k-1}^{N-1}(x^2_{\{1,\ldots,N\}\setminus \{i\}}), $$
furthermore $d[X_{i},X_j]_t = \delta_{ij} \beta^{-1}dt$. } \fi 
Therefore, by the SDE (\ref{SDE-B-normalized}), 
\begin{align}\label{elementary-symm-b-1}
d(t^l\cdot e^N_k(\tilde X_{t}^2))&=  lt^{l-1}\cdot e^N_k(\tilde X_{t}^2) \> dt+ 
\frac{2t^l}{\sqrt\beta}\sum_{j=1}^N\tilde X_{t,j}\cdot e_{k-1}^{N-1}(\tilde X_{t,\{1,\ldots,N\}\setminus\{j\}}^2 )\>
 dB_{t,j}\notag\\
&+
2t^l\sum_{i,j; i\ne j} \Bigl( 
\frac{  e_{k-1}^{N-1}(\tilde X_{t,\{1,\ldots,N\}\setminus\{j\}}^2)X_{t,j}}{\tilde X_{t,j}-\tilde X_{t,i}}
+\frac{  e_{k-1}^{N-1}(\tilde X_{t,\{1,\ldots,N\}\setminus\{j\}}^2)X_{t,j}}{\tilde X_{t,j}+\tilde X_{t,i}}\Bigr)dt
\notag\\
&+2t^l\nu\sum_{j=1}^N  e_{k-1}^{N-1}(\tilde X_{t,\{1,\ldots,N\}\setminus\{j\}}^2) \> dt
+\frac{t^l}{\beta}\sum_{j=1}^N  e_{k-1}^{N-1}(\tilde X_{t,\{1,\ldots,N\}\setminus\{j\}}^2) \> dt
\notag\\
&= lt^{l-1}\cdot e^N_k(\tilde X_{t}^2) \> dt+ 
\frac{2t^l}{\sqrt\beta}\sum_{j=1}^N\tilde X_{t,j}\cdot e_{k-1}^{N-1}(\tilde X_{t,\{1,\ldots,N\}\setminus\{j\}}^2 )\>
 dB_{t,j}\notag\\
&+2t^l\sum_{i,j; i\ne j} \Bigl( 
\frac{\tilde X_{t,j}^2  e_{k-1}^{N-1}(\tilde X_{t,\{1,\ldots,N\}\setminus\{j\}}^2 )-
\tilde X_{t,i}^2  e_{k-1}^{N-1}(\tilde X_{t,\{1,\ldots,N\}\setminus\{i\}}^2 )}{\tilde X_{t,j}^2-\tilde X_{t,i}^2}
\Bigr)dt
\notag\\
&+2t^l(\nu+1/(2\beta))\sum_{j=1}^N  e_{k-1}^{N-1}(\tilde X_{t,\{1,\ldots,N\}\setminus\{j\}}^2) \> dt.
\end{align}
Simple combinatorial computations (cf. (4.9)-(4.12) in \cite{VW}) show for $i\neq j$ and $k\ge2$ that
\begin{align}\label{elementary-symm-b-2}
\tilde X_{t,j}^2  e_{k-1}^{N-1}&(\tilde X_{t,\{1,\ldots,N\}\setminus\{j\}}^2 )-
\tilde X_{t,i}^2  e_{k-1}^{N-1}(\tilde X_{t,\{1,\ldots,N\}\setminus\{i\}}^2 )\\
&=
 (\tilde X_{t,j}^2-\tilde X_{t,i}^2)\cdot e_{k-1}^{N-2}(\tilde X_{t,\{1,\ldots,N\}\setminus\{i,j\}}^2 ),\notag
\end{align}
\begin{equation}\label{elementary-symm-b-4}
 \sum_{i,j=1,\ldots,N; i\ne j} e_{k-1}^{N-2}(\tilde X_{t,\{1,\ldots,N\}\setminus\{i,j\}}^2 )=
 (N-k+1)(N-k) e_{k-1}^{N}(\tilde X_{t}^2),
\end{equation}
and
\begin{equation}\label{elementary-symm-b-3}
\sum_{j=1}^N  e_{k-1}^{N-1}(\tilde X_{t,\{1,\ldots,N\}\setminus\{j\}}^2)=(N-k+1)e^N_{k-1}(\tilde X_{t}^2).
\end{equation}
 (\ref{elementary-symm-b-2})-(\ref{elementary-symm-b-3}), and (\ref{elementary-symm-b-1})
 now lead to the lemma for $k\ge2$. For $k=1$, the lemma also follows by an even simpler computation.
\end{proof}

We also have the following analogue of Lemma \ref{Brownian-martingale} by the same reasons.
The proof is very similar, hence omitted.
\begin{lemma}\label{Brownian-martingale-b}
For each polynomial $p$ in $N$ variables, $i=1,\ldots,N$, and  $l\ge0$, the process
$\Bigl(\int_0^t s^l\cdot p(\tilde X_{s})\> dB_s^i)_{t\ge0}$
is a martingale.
\end{lemma}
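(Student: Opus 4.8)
The plan is to prove \texttt{Lemma \ref{Brownian-martingale-b}} by mimicking the argument of \texttt{Lemma \ref{Brownian-martingale}} verbatim, the only difference being that the underlying one-dimensional auxiliary process now comes from a Bessel process of type $B_N$ rather than type $A_{N-1}$. First I would recall, following e.g.\ \cite{RV1}, that if $(X_t)_{t\ge0}$ solves \eqref{SDE-B} then the radial part $R_t:=\sum_{i=1}^N X_{t,i}^2$ is (up to a constant time change and a dimension depending on $N,\nu,\beta$) a classical one-dimensional squared Bessel process; this uses It\^o's formula applied to $x\mapsto\|x\|^2$ together with the fact that the drift terms $\sum_{j\ne i}\bigl(\frac{1}{x_i-x_j}+\frac{1}{x_i+x_j}\bigr)$ and $\frac{\nu\beta}{x_i}$ contribute, after summation over $i$, only a constant-in-space positive drift. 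Consequently all moments $\mathbb E(R_t^m)$ are finite, in fact locally bounded in $t$.

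Next I would translate this into the integrability statement actually needed: for every polynomial $p$ in $N$ variables, every $i$, and every $t>0$, the map $(\omega,s)\mapsto s^l\,p(\tilde X_s(\omega))$ lies in $L^2(\mathcal P\otimes\lambda|_{[0,t]})$. Since $\tilde X_s=X_s/\sqrt\beta$ and each coordinate satisfies $|X_{s,i}|\le R_s^{1/2}$, any monomial in the $\tilde X_{s,i}$ of total degree $d$ is bounded in absolute value by $\beta^{-d/2}R_s^{d/2}$, hence $|s^l p(\tilde X_s)|^2\le C\,s^{2l}(1+R_s^{D})$ for suitable constants $C,D$ depending only on $p$ and $\beta$. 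Integrating over $[0,t]$ and taking expectations, the bound from the first step gives $\mathbb E\int_0^t |s^l p(\tilde X_s)|^2\,ds<\infty$.

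Finally, the standard theory of the It\^o integral says that for an integrand in $L^2(\mathcal P\otimes\lambda|_{[0,t]})$ (adapted to the Brownian filtration, which $s^l p(\tilde X_s)$ is, since $(\tilde X_s)$ is adapted) the stochastic integral $\bigl(\int_0^t s^l p(\tilde X_s)\,dB_s^i\bigr)_{t\ge0}$ is a square-integrable martingale; I would simply cite this, exactly as in \texttt{Lemma \ref{Brownian-martingale}}, and conclude. The only mildly delicate point — and the one I would flag as the main obstacle — is the possibility that the process hits the boundary $\partial C_N^B$, in particular that some coordinate $X_{t,i}$ reaches $0$, which a priori could cause an explosion in the drift; but this is precisely where the hypotheses $\beta\ge1/2$, $\nu\ge1$ enter (via Proposition 6.1 of \cite{CGY}, already invoked in this section), guaranteeing the process stays in the interior for $t>0$ and that the squared-radial-part computation is valid on all of $[0,\infty)$. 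Given the amount of overlap with the already-proven type $A$ case, I agree with the authors that the proof can legitimately be omitted.
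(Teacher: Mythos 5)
Your proposal is correct and follows essentially the same route as the paper, which omits the proof precisely because it mirrors that of Lemma \ref{Brownian-martingale}: the radial part $\sum_i X_{t,i}^2$ is a classical one-dimensional squared Bessel process with all moments finite, so every polynomial integrand $s^l p(\tilde X_s)$ lies in $L^2(\mathcal P\otimes\lambda|_{[0,t]})$ and the It\^o integral is a martingale by construction. Your explicit domination step $|s^l p(\tilde X_s)|^2\le C s^{2l}(1+R_s^D)$ just fills in a detail the paper leaves implicit, and note that the radial-part fact holds for all $\beta,\nu\ge0$ via the semigroup description in \cite{RV1}, so the interior-avoidance hypotheses you flag are not actually needed for this lemma.
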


With Lemmas \ref{symmetric-pol-in-t-b}   and \ref{Brownian-martingale-b}, we obtain the following martingales:

\begin{proposition}\label{elementary-symm-martingale-b}
For all $\nu\ge0$, $\beta>0$, $k=1,\ldots,N$, and all starting points $\tilde X_{0,\beta}\in C_N^B$, the process
\begin{align}
\Biggl( e^N_k(\tilde X_{t}^2) + \sum_{l=1}^{k} {(-2t)}^l\binom{N-k+l}{l} (N-k+\nu+1/(2\beta))_l \cdot
 e_{k-l}^N(\tilde X_{t}^2)
\Biggr)_{t\ge0}
\notag\end{align}
(with the Pochhammer symbol $(x)_r:=x(x+1)\cdots(x+r-1)$)
is a martingale. 
\if 0
\textcolor{blue}{I believe the right formula is 
$$ \Bigg( e^N_k(\tilde X_{t}^2) + \sum_{l=1}^{k} {(-2t)}^l\binom{N-k+l}{l} (N-k+\nu+1/(2\beta))_l \cdot
e_{k-l}^N(\tilde X_{t}^2) \Bigg)_{t\ge0} $$
since 
\begin{align*} d&\Bigg(\binom{N-1}{k-1}(N-k+\nu+1/(2\beta))_{k-1}(-2t)^{k-1}e_1^N(\tilde X_t^2)\Bigg)  \\
& = (-2)^{k-1}\binom{N-1}{k-1}(N-k+\nu+1/(2\beta))_{k-1}\bigg((k-1)t^{k-2}e_1^N(\tilde X_t^2)\bigg. + \\ & \quad\quad  \bigg. + 2t^{k-1} (N-1+\nu+1/(2\beta))N\bigg)dt+ \mbox{mart. part} \\
& = (... )e_1^N(\tilde X_t^2)\ dt + 2(-2t)^{k-1} (N-k+\nu+1/(2\beta))_k \binom{N}{k} k\ dt + \mbox{mart. part}  
\end{align*}
and 
\begin{align*}
d&\bigg((-2t)^k \binom N k (N-k+\nu+1/(2\beta))_k\bigg)  = \\
& = (-2)^k k t ^{k-1} \binom N k (N-k+\nu+1/(2\beta))_k\ dt = \\
&= -2(-2t)^{k-1}  (N-k+\nu + 1 /(2\beta))_k \binom N k k\ dt.
\end{align*} 
} \fi

\end{proposition}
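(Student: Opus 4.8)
The plan is to mimic the proof of Proposition~\ref{elementary-symm-martingale}, working throughout with the elementary symmetric polynomials evaluated at the squared coordinates. I first restrict to $\beta\ge 1/2$, $\nu\ge 1$, where Lemma~\ref{symmetric-pol-in-t-b} is available. Put $a_l:=\binom{N-k+l}{l}(N-k+\nu+1/(2\beta))_l$ (so $a_0=1$); the process in question is $\Phi_t:=\sum_{l=0}^{k}(-2)^l a_l\, t^l e_{k-l}^N(\tilde X_t^2)$. For each $l=0,1,\dots,k$ I apply Lemma~\ref{symmetric-pol-in-t-b} with its ``$k$'' replaced by $k-l$ and its ``$l$'' kept equal to $l$, and add up the resulting stochastic differentials weighted by $(-2)^l a_l$; for $l=k$ the corresponding summand is $(-2t)^k\binom{N}{k}(N-k+\nu+1/(2\beta))_k$, a deterministic function whose differential $k(-2)^k a_k t^{k-1}\,dt$ is what the formula of Lemma~\ref{symmetric-pol-in-t-b} degenerates to when $e_0^N\equiv 1$ and $e_{-1}^N:=0$. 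The martingale parts that arise are, up to the factor $2/\sqrt\beta$, finite linear combinations of the integrals $\int_0^t s^l\,\tilde X_{s,j}\,e_{k-l-1}^{N-1}(\tilde X_{s,\{1,\dots,N\}\setminus\{j\}}^2)\,dB_{s,j}$, each of which is a martingale by Lemma~\ref{Brownian-martingale-b}.

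It remains to see that the drift of $d\Phi_t$ vanishes. The $l$-th summand contributes the two drift terms $l\,t^{l-1}e_{k-l}^N(\tilde X_t^2)$ and $2t^l(N-k+l+\nu+1/(2\beta))(N-k+l+1)\,e_{k-l-1}^N(\tilde X_t^2)$; the first is absent for $l=0$ and the second vanishes for $l=k$ (as $e_{-1}^N:=0$). Reindexing the second family by $l\mapsto l-1$, the total drift collapses to $\sum_{l=1}^{k} t^{l-1}e_{k-l}^N(\tilde X_t^2)\cdot 2(-2)^{l-1}\big[-l\,a_l+(N-k+l)\,(N-k+l-1+\nu+1/(2\beta))\,a_{l-1}\big]$. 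Hence the drift is identically zero provided $l\,a_l=(N-k+l)(N-k+l-1+\nu+1/(2\beta))\,a_{l-1}$ for $l=1,\dots,k$, and this is immediate from $\binom{N-k+l}{l}/\binom{N-k+l-1}{l-1}=(N-k+l)/l$ together with $(N-k+\nu+1/(2\beta))_l/(N-k+\nu+1/(2\beta))_{l-1}=N-k+l-1+\nu+1/(2\beta)$. Thus $\Phi_t$ is a constant plus a sum of martingales, which proves the assertion for $\beta\ge 1/2$, $\nu\ge 1$.

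To reach all $\beta>0$, $\nu\ge 0$ I argue as in the final paragraph of the proof of Proposition~\ref{elementary-symm-martingale}. By Dynkin's formula (Section~III.10 of \cite{RW}) the previous step shows that the $B_N$-Weyl-invariant polynomial $g_{N,k}(x,t):=e_k^N(x^2)+\sum_{l=1}^{k}(-2t)^l\binom{N-k+l}{l}(N-k+\nu+1/(2\beta))_l\,e_{k-l}^N(x^2)$ satisfies $(\partial_t+\tilde\Delta_{\beta,\nu})g_{N,k}\equiv 0$ for $\beta\ge 1/2$, $\nu\ge 1$, where $\tilde\Delta_{\beta,\nu}$ is the generator of $(\tilde X_t)_{t\ge 0}$ from \eqref{SDE-B-normalized}. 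For fixed $(x,t)$ with $x$ in the interior of $C_N^B$, both $g_{N,k}$ and $\tilde\Delta_{\beta,\nu}$ depend polynomially on $\nu$ and on $1/\beta$, so $(\partial_t+\tilde\Delta_{\beta,\nu})g_{N,k}(x,t)$ is a polynomial in $(1/\beta,\nu)$ that vanishes on an open set, hence identically, in particular for all $\beta>0$ and all $\nu\ge 0$. Since $g_{N,k}$ is a polynomial and $(\sum_i X_{t,i}^2)_{t\ge 0}$ is a squared Bessel process (so $\tilde X_t$ has all polynomial moments, cf.\ the proof of Lemma~\ref{Brownian-martingale}), Dynkin's formula now gives that $(g_{N,k}(\tilde X_t,t))_{t\ge 0}$ is a martingale for all $\beta>0$, $\nu\ge 0$.

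The truly routine input, Lemma~\ref{symmetric-pol-in-t-b}, is already available, and the only bookkeeping is the index shift above together with the degenerate cases $l=0$ (no negative power of $t$ appears) and $l=k$ (where $e_{-1}^N:=0$). I expect the one point needing a little care is the analytic-continuation step: one should note explicitly that the singular drift coefficients $1/(x_i\pm x_j)$ and $1/x_i$ carry neither $\beta$ nor $\nu$, so that the parameter dependence of $(\partial_t+\tilde\Delta_{\beta,\nu})g_{N,k}$ is genuinely polynomial, and that $g_{N,k}$—being symmetric and even in each coordinate—extends to a smooth Weyl-invariant function on $\mathbb R^N$, so that Dynkin's formula applies to the reflected process with no boundary complications.
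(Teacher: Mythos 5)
Your proposal is correct and follows essentially the same route as the paper: apply Lemma~\ref{symmetric-pol-in-t-b} termwise (with $k$ replaced by $k-l$), use Lemma~\ref{Brownian-martingale-b} for the stochastic integrals, and extend from $\beta\ge 1/2$, $\nu\ge 1$ to all $\beta>0$, $\nu\ge 0$ by the analytic-continuation/Dynkin argument of Proposition~\ref{elementary-symm-martingale}. The only difference is that you spell out the telescoping of the drift via the recursion $l\,a_l=(N-k+l)(N-k+l-1+\nu+1/(2\beta))\,a_{l-1}$, which the paper asserts follows ``readily''; your verification is accurate.
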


\begin{proof}  For $\beta\ge 1/2$ and  $\nu\ge1$, 
it follows  readily from  Lemmas \ref{symmetric-pol-in-t-b}   and \ref{Brownian-martingale-b} that
\begin{align}\label{mart-long-b}
\Biggl( e^N_k(\tilde X_{t}^2) +& \sum_{l=1}^{k-1} {(-2t)}^l\binom{N-k+l}{l} (N-k+\nu+1/(2\beta))_l 
 e^N_{k-l}(\tilde X_{t}^2)
\notag \\
& + (-{2})^k\cdot\binom{N}{k}k (N-k+\nu+1/(2\beta))_{k}
  \int_0^t  s^{k-1} \> ds
\Biggr)_{t\ge0}
\notag\end{align}
is  a martingale which proves the claim in this case. The extension to arbitrary  $\beta,\nu$ follows again by 
``analytic continuation'' as in the proof of Proposition \ref{elementary-symm-martingale}.
\end{proof}

Notice that the algebraic functions in Proposition \ref{elementary-symm-martingale-b},
 which lead to martingales, depend on $\nu+1/(2\beta)$ only. Moreover, Lemma \ref{symmetric-pol-in-t-b} and
 Proposition \ref{elementary-symm-martingale-b} remain valid for $\beta=\infty$, in which case the
 SDE (\ref{SDE-B-normalized}) is an ODE, and the process $(\tilde X_{t})_{t\ge0}$ is deterministic
 whenever so is the initial condition for $t=0$. There are several limit theorems 
(laws of large numbers, CLTs) for the limit transition $\beta\to\infty$; see \cite{AKM2}, \cite{AV}, 
 \cite{VW}.   Proposition \ref{elementary-symm-martingale-b} for $\beta\in]0,\infty]$ leads to:

\begin{corollary}\label{constant-expectation-b}
For a fixed   $x_0\in C_N^A$, let $(\tilde X_{t})_{t\ge0}$ be a Bessel process of type B 
with start in  $x_0$ for the parameters  $\nu\ge0$ and $\beta\in]0,\infty]$.
Then for $k=0,1,\ldots,N$ and $t\ge0$,  the expectations
$\mathbb E( e_k^N(\tilde X_{t}^2) )$ depend on $\nu+1/(2\beta)$  only (and not on $\nu,\beta$).
\end{corollary}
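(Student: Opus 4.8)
The plan is to mirror the proof of Corollary \ref{constant-expectation} in the type-$A$ case, using Proposition \ref{elementary-symm-martingale-b} as the type-$B$ replacement for Proposition \ref{elementary-symm-martingale}. Fix the starting point $x_0\in C_N^B$ and write $\mu:=\nu+1/(2\beta)$ for the combined parameter. First I would dispose of the base cases: for $k=0$ the statement is trivial since $e_0^N\equiv 1$, and for $k=1$ I would use the martingale from Proposition \ref{elementary-symm-martingale-b} with $k=1$, namely
\[
\Bigl(e_1^N(\tilde X_t^2) + (-2t)\,\binom{N-1+1}{1}(N-1+\mu)_1\,e_0^N(\tilde X_t^2)\Bigr)_{t\ge0}
= \Bigl(e_1^N(\tilde X_t^2) - 2tN(N-1+\mu)\Bigr)_{t\ge0},
\]
whose constant expectation equals $e_1^N(x_0^2)$; hence $\mathbb E\bigl(e_1^N(\tilde X_t^2)\bigr)=e_1^N(x_0^2)+2tN(N-1+\mu)$, which manifestly depends on $\nu,\beta$ only through $\mu$.

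Next I would run an induction on $k$. Suppose the claim holds for all indices $<k$. Taking expectations in the martingale of Proposition \ref{elementary-symm-martingale-b} and using that the expectation of a martingale is constant in $t$, equal to its value at $t=0$, gives
\[
\mathbb E\bigl(e_k^N(\tilde X_t^2)\bigr)
= e_k^N(x_0^2) - \sum_{l=1}^{k}(-2t)^l\binom{N-k+l}{l}(N-k+\mu)_l\,\mathbb E\bigl(e_{k-l}^N(\tilde X_t^2)\bigr).
\]
Each term on the right is a polynomial in $t$ whose coefficients involve only $\mu$ (the binomial and Pochhammer factors) times $\mathbb E\bigl(e_{k-l}^N(\tilde X_t^2)\bigr)$, which by the induction hypothesis depends on $(\nu,\beta)$ through $\mu$ alone. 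Hence the left-hand side does as well, completing the induction. I would phrase the argument uniformly for $\beta\in\,]0,\infty]$, noting that Proposition \ref{elementary-symm-martingale-b} is valid in this full range (including $\beta=\infty$, where the process is deterministic given a deterministic start and $\mu=\nu$).

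The one point requiring a word of care — and the closest thing to an obstacle — is integrability: one must know that $e_k^N(\tilde X_t^2)$ is integrable so that "expectation of a martingale is constant" is legitimate and so the recursion above is meaningful. This is handled exactly as in Lemma \ref{Brownian-martingale}: the coordinate sum of squares of the (unnormalized) process is a classical squared Bessel process, so all polynomials in $\tilde X_t$ are integrable (indeed have moments of all orders), and the processes in Proposition \ref{elementary-symm-martingale-b} are genuine martingales, not merely local ones. With that in hand the proof is purely formal bookkeeping, and the proof can simply cite Proposition \ref{elementary-symm-martingale-b} together with this induction, in close analogy with the proof of Corollary \ref{constant-expectation}.
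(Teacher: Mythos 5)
Your proposal is correct and follows exactly the route the paper takes: the paper's proof is simply ``clear by Proposition \ref{elementary-symm-martingale-b} and induction,'' and your write-up fills in precisely that induction (base cases, the recursion from constant martingale expectation, and the integrability remark inherited from Lemma \ref{Brownian-martingale}). Nothing to change.
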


\begin{proof} This is clear by 
 Proposition \ref{elementary-symm-martingale-b} and induction.
\end{proof}

We now  start in  $x_0=0\in C_N^A$. Then for $\beta=\infty$, the solution of the
 ODE (\ref{SDE-B-normalized}) can be written via  zeros  of some
  Laguerre polynomial. For this we recapitulate that
for $\alpha>0$,
the Laguerre polynomials  
\begin{equation}\label{def-laguerre}
L_n^{(\alpha)}(x)= \sum_{k=0}^n  \binom{n+\alpha}{n-k} \frac{(-x)^k}{k!}
\end{equation}
 are orthogonal w.r.t.~the density $e^{-x}\cdot x^{\alpha}$ on $]0,\infty[$ as in \cite{S}.
We need the following fact; see \cite{AKM1}, 
 Section 6.7 of \cite{S}, or, in the present notation, \cite{AV}:

\begin{lemma}\label{char-zero-B}
Let $\nu>0$ and denote by $z_1^{(\nu-1)}> \ldots>z_N^{(\nu-1)}>0$ the ordered zeros of  $L_N^{(\nu-1)}$.
Then the vector $y\in C_N^B$ with 
$y^2:=(y_1^2, \ldots, y_N^2)=2(z_1^{(\nu-1)},\ldots, z_N^{(\nu-1)})$
satisfies 
$$\frac{1}{2}y_i=
\sum_{j: j\ne i} \Bigl(\frac{1}{y_i-y_j} +\frac{1}{y_i+y_j}\Bigr) +\frac{\nu}{y_i} \quad\quad (i=1,\ldots,N).$$
\end{lemma}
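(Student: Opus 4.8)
The plan is to reduce the claimed $N$ equations to a single polynomial identity involving $L_N^{(\nu-1)}$ and its derivative, and then to invoke the classical differential equation satisfied by the Laguerre polynomials. First I would fix $i$ and rewrite the right-hand side in terms of the polynomial $P(x):=\prod_{j=1}^N(x-z_j^{(\nu-1)})$ evaluated near $x=z_i:=z_i^{(\nu-1)}$. The key elementary observation is that for distinct simple roots $z_1,\dots,z_N$ of $P$ one has the standard identities
\begin{equation}\notag
\sum_{j:j\ne i}\frac{1}{z_i-z_j}=\frac{P''(z_i)}{2P'(z_i)},\qquad
\sum_{j:j\ne i}\frac{2z_i}{z_i^2-z_j^2}
=\sum_{j:j\ne i}\Bigl(\frac{1}{z_i-z_j}+\frac{z_i+z_j}{z_i^2-z_j^2}\cdot\frac{z_i-z_j}{z_i-z_j}\Bigr),
\end{equation}
but it is cleaner to first substitute $y_i^2=2z_i$ directly: writing $\frac{1}{y_i-y_j}+\frac{1}{y_i+y_j}=\frac{2y_i}{y_i^2-y_j^2}=\frac{y_i}{z_i-z_j}$, the asserted relation becomes, after multiplying through by $y_i$ and using $\frac{1}{2}y_i^2=z_i$,
\begin{equation}\notag
z_i=\sum_{j:j\ne i}\frac{z_i}{z_i-z_j}+\nu,
\qquad\text{i.e.}\qquad
\sum_{j:j\ne i}\frac{z_i}{z_i-z_j}=z_i-\nu .
\end{equation}

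Next I would recognize the left-hand side as $z_i\cdot\frac{P''(z_i)}{2P'(z_i)}$ (the Vandermonde-type sum) and rephrase everything through $L_N^{(\nu-1)}$ itself, since $P$ is a scalar multiple of $L_N^{(\nu-1)}$ and hence satisfies the same differential equation. The Laguerre ODE is
\begin{equation}\notag
x\,(L_N^{(\nu-1)})''(x)+(\nu-x)\,(L_N^{(\nu-1)})'(x)+N\,L_N^{(\nu-1)}(x)=0 .
\end{equation}
Evaluating this at $x=z_i$, where $L_N^{(\nu-1)}(z_i)=0$, gives $z_i\,(L_N^{(\nu-1)})''(z_i)=(z_i-\nu)\,(L_N^{(\nu-1)})'(z_i)$, i.e. $\frac{z_i\,(L_N^{(\nu-1)})''(z_i)}{(L_N^{(\nu-1)})'(z_i)}=z_i-\nu$. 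Comparing with $\sum_{j:j\ne i}\frac{z_i}{z_i-z_j}=\frac{z_i P''(z_i)}{2P'(z_i)}$ forces only a factor of $2$ to be accounted for, which comes precisely from $\frac{P''(z_i)}{P'(z_i)}=2\sum_{j:j\ne i}\frac{1}{z_i-z_j}$ — so in fact the sum equals $\frac{z_i P''(z_i)}{2P'(z_i)}$, matching $z_i-\nu$ exactly. Assembling these identities in reverse order reconstitutes the asserted equation for each $i$, and since $z_i>0$ we may divide by $y_i>0$ legitimately, so the vector $y\in C_N^B$ is well-defined and satisfies the stated fixed-point relation.

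The only genuinely delicate point is keeping the algebra of the $B$-type sum straight: one must be careful that the combination $\frac{1}{y_i-y_j}+\frac{1}{y_i+y_j}$ collapses cleanly to $\frac{y_i}{z_i-z_j}$ under $y_i^2=2z_i$, and that the leftover $\frac{\nu}{y_i}$ term pairs with the constant $\nu$ in the Laguerre ODE rather than with the variable coefficient. Everything else is the classical translation between "sums over zeros of a polynomial" and "ratios of derivatives of that polynomial," together with the Laguerre differential equation; I expect no obstacle beyond bookkeeping. (Alternatively, one can cite \cite{AV} or Section~6.7 of \cite{S} directly, where exactly this electrostatic characterization of the zeros of $L_N^{(\nu-1)}$ is recorded, and the proof reduces to matching our normalization $y^2=2z^{(\nu-1)}$ with theirs.)
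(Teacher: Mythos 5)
The paper does not actually prove this lemma; it states it and cites \cite{AKM1}, Section 6.7 of \cite{S}, and \cite{AV}, so your fallback of quoting the electrostatic characterization of the Laguerre zeros is exactly what the authors do. Your self-contained argument via the Laguerre differential equation is the standard proof of that characterization, and the overall strategy (substitute $y_i^2=2z_i$, collapse the $B$-type pair to a single fraction, translate the sum over zeros into $P''/P'$, and invoke the ODE at a zero) is the right one.

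However, your write-up contains a factor-of-two slip in the reduction which is then cancelled by a second slip at the comparison step. Multiplying $\sum_{j\ne i}\frac{y_i}{z_i-z_j}$ by $y_i$ produces $\sum_{j\ne i}\frac{y_i^2}{z_i-z_j}=\sum_{j\ne i}\frac{2z_i}{z_i-z_j}$, so the reduced identity you must verify is
\begin{equation}\notag
\sum_{j:j\ne i}\frac{z_i}{z_i-z_j}=\frac{z_i-\nu}{2},
\end{equation}
not $z_i=\sum_{j\ne i}\frac{z_i}{z_i-z_j}+\nu$ as displayed. Correspondingly, combining $\sum_{j\ne i}\frac{1}{z_i-z_j}=\frac{P''(z_i)}{2P'(z_i)}$ with the ODE evaluated at a zero, $z_iP''(z_i)=(z_i-\nu)P'(z_i)$, gives the sum the value $\frac{z_i-\nu}{2}$; your final assertion that $\frac{z_iP''(z_i)}{2P'(z_i)}$ ``matches $z_i-\nu$ exactly'' is off by the same factor of two. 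The two errors compensate, so the lemma does follow, but both displayed intermediate identities are false as written. The corrected chain is
\begin{equation}\notag
\sum_{j:j\ne i}\Bigl(\frac{1}{y_i-y_j}+\frac{1}{y_i+y_j}\Bigr)+\frac{\nu}{y_i}
=\frac{1}{y_i}\Bigl(\sum_{j:j\ne i}\frac{2z_i}{z_i-z_j}+\nu\Bigr)
=\frac{1}{y_i}\bigl((z_i-\nu)+\nu\bigr)=\frac{z_i}{y_i}=\frac{y_i}{2},
\end{equation}
which is exactly the assertion of the lemma.
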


This leads to the following  solutions of the ODEs (\ref{SDE-B-normalized}) for $\beta=\infty$; cf.~\cite{AV}:

\begin{corollary}\label{special-solution-B1}
Let $\nu>0$ and $y\in C_N^B$ the vector in Lemma \ref{char-zero-B}.
 Then $\phi(t)= \sqrt{t}\cdot y $ is a solution of (\ref{SDE-B-normalized}) for $\beta=\infty$.
\end{corollary}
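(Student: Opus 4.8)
The plan is to verify the assertion by a direct substitution, exploiting the scaling homogeneity of the drift. For $\beta=\infty$ the term $\frac{1}{\sqrt\beta}\,dB_{t,i}$ in (\ref{SDE-B-normalized}) vanishes, so (\ref{SDE-B-normalized}) reduces to the autonomous ODE system
\[
\phi_i'(t)=\sum_{j\ne i}\Bigl(\frac{1}{\phi_i(t)-\phi_j(t)}+\frac{1}{\phi_i(t)+\phi_j(t)}\Bigr)+\frac{\nu}{\phi_i(t)}\qquad(i=1,\ldots,N)
\]
on the interior of $C_N^B$. Write $V_i(\phi)$ for the right-hand side, viewed as a function of $\phi\in\mathrm{int}\,C_N^B$; each $V_i$ is positively homogeneous of degree $-1$, that is $V_i(c\phi)=c^{-1}V_i(\phi)$ for $c>0$, which matches $\frac{d}{dt}\sqrt t=\frac{1}{2\sqrt t}$.

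First I would note that the candidate $\phi(t)=\sqrt t\cdot y$ takes values in $\mathrm{int}\,C_N^B$ for every $t>0$: since $\nu>0$ we have $\nu-1>-1$, so $L_N^{(\nu-1)}$ has $N$ pairwise distinct positive zeros $z_1^{(\nu-1)}>\cdots>z_N^{(\nu-1)}>0$, and therefore $y_i=\sqrt{2z_i^{(\nu-1)}}$ satisfies $y_1>\cdots>y_N>0$; in particular $V_i(\phi(t))$ is well defined and $\phi$ is smooth on $]0,\infty[$. Then I would compare both sides of the ODE. On the left, $\phi_i'(t)=\tfrac{1}{2\sqrt t}\,y_i$. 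On the right, homogeneity gives $V_i(\phi(t))=V_i(\sqrt t\,y)=\tfrac{1}{\sqrt t}\,V_i(y)$, and Lemma \ref{char-zero-B} states exactly that $V_i(y)=\tfrac12 y_i$. Hence $V_i(\phi(t))=\tfrac{1}{2\sqrt t}\,y_i=\phi_i'(t)$, which is the claim.

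There is essentially no real obstacle here: the substance of the statement is carried entirely by Lemma \ref{char-zero-B}, and the only things to watch are that the drift has the correct homogeneity degree $-1$ and that $y$ — hence $\phi(t)$ for $t>0$ — stays off the walls of $C_N^B$ so that the vector field is defined along the trajectory. One may add the remark that $\phi$ extends continuously by $\phi(0)=0$, so this is the solution of (\ref{SDE-B-normalized}) with $\beta=\infty$ starting at the origin, in parallel with Lemma \ref{special-solution} in the type-$A$ case; that continuity is immediate.
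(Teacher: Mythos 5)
Your verification is correct and is exactly the argument the paper intends (the corollary is stated as an immediate consequence of Lemma \ref{char-zero-B}): the drift is positively homogeneous of degree $-1$, so $\phi(t)=\sqrt{t}\,y$ solves the ODE precisely because $y$ satisfies the stationarity-type identity of the lemma. The added check that the zeros of $L_N^{(\nu-1)}$ are distinct and positive, so that $\phi(t)$ stays in the interior of $C_N^B$ for $t>0$, is a sensible and correct supplement.
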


This result has the following consequence:

\begin{corollary}\label{det-formula-b} Let $( X_{t})_{t\ge0}$ be the Bessel process of type B
starting in 0 with parameters  $\nu\ge 0,\beta>0$.
Then, 
\begin{align} \label{Lag_det_form}\nonumber \mathbb E\bigl(\prod_{i=1}^N (y- X_{t,i}^2)\bigr) &= c_{(\beta\nu,\beta)}^B \int_{\mathbb R^N} \big(\prod_{i=1}^N(y-x_i^2)\big) \cdot t^{-\gamma_{(\beta\nu,\beta)}^B-N/2}\mathrm e^{-\|x\|^2/(2t)} \\ & \hspace{2cm} \times \prod_{i<j}(x_i^2-x_j^2)^{2\beta} \prod_{i=1}^N x_i^{2\beta\nu}\ dx \\ & = (2t \beta)^{N}\cdot(-1)^NN!\cdot L_N^{(\nu+1/(2\beta)-1)}(y/(2t\beta))
  \quad\quad\text{for}\quad y\in \mathbb R.\end{align}
Moreover, for $k=0,1,\ldots,N$,
\begin{equation}\label{det-form-b-2} 
\mathbb E(e^N_k( X_{t}^2))= \binom{N+\nu+1/(2\beta)-1}{k} \cdot \frac{N!}{(N-k)!}\cdot (2t \beta)^{k}.
\end{equation}
\end{corollary}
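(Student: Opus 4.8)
The plan is to follow the same route as in the proof of Corollary~\ref{det-formula}, now using the type-$B$ results. The first equality in \eqref{Lag_det_form} is merely the definition of the expectation: since $X_0=0$, the random vector $X_t$ has the Lebesgue density \eqref{density-A-0} on $C_N^B$ with $k=(\beta\nu,\beta)$, $w_k^B$ as in \eqref{def-wk}, and $\gamma^B_{(\beta\nu,\beta)}$ as in \eqref{def-gamma}, so the integral is just $\mathbb E\big(\prod_{i=1}^N(y-X_{t,i}^2)\big)$. Hence the real content is the second equality, together with \eqref{det-form-b-2}.

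For that, first expand
\[
\mathbb E\Big(\prod_{i=1}^N (y- X_{t,i}^2)\Big)=\sum_{k=0}^N (-1)^k\,\mathbb E\big(e_k^N( X_{t}^2)\big)\,y^{N-k},
\]
and pass to the renormalized process via $X_{t,i}^2=\beta\,\tilde X_{t,i}^2$, which gives $\mathbb E(e_k^N(X_t^2))=\beta^k\,\mathbb E(e_k^N(\tilde X_t^2))$. Now apply Corollary~\ref{constant-expectation-b}: the quantities $\mathbb E(e_k^N(\tilde X_t^2))$ for the parameters $(\nu,\beta)$ coincide with those for the parameters $(\nu',\infty)$, where $\nu':=\nu+1/(2\beta)$; note $\nu'>0$ since $\beta>0$. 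For $\beta=\infty$ and start in $0$ the renormalized process is deterministic, and by Corollary~\ref{special-solution-B1} it equals $\phi(t)=\sqrt t\,y$ with $y^2=2\big(z_1^{(\nu'-1)},\dots,z_N^{(\nu'-1)}\big)$, the scaled ordered zeros of $L_N^{(\nu'-1)}$. Therefore $\mathbb E(e_k^N(\tilde X_t^2))=e_k^N(t\,y^2)=t^k e_k^N(2z^{(\nu'-1)})=(2t)^k e_k^N(z^{(\nu'-1)})$, so that $\mathbb E(e_k^N(X_t^2))=(2t\beta)^k e_k^N(z^{(\nu'-1)})$.

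Reassembling the sum and factoring out $(2t\beta)^N$ yields
\[
\mathbb E\Big(\prod_{i=1}^N (y- X_{t,i}^2)\Big)=(2t\beta)^N\sum_{k=0}^N(-1)^k e_k^N(z^{(\nu'-1)})\Big(\frac{y}{2t\beta}\Big)^{N-k}=(2t\beta)^N\prod_{i=1}^N\Big(\frac{y}{2t\beta}-z_i^{(\nu'-1)}\Big).
\]
It remains to identify $\prod_{i=1}^N(x-z_i^{(\alpha)})$ with a Laguerre polynomial: from \eqref{def-laguerre} the leading coefficient of $L_N^{(\alpha)}$ is $(-1)^N/N!$, so $L_N^{(\alpha)}(x)=\frac{(-1)^N}{N!}\prod_{i=1}^N(x-z_i^{(\alpha)})$, i.e.\ $\prod_{i=1}^N(x-z_i^{(\alpha)})=(-1)^N N!\,L_N^{(\alpha)}(x)$. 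Taking $\alpha=\nu'-1=\nu+1/(2\beta)-1$ and $x=y/(2t\beta)$ gives the second equality in \eqref{Lag_det_form}. Finally, \eqref{det-form-b-2} follows by comparing the coefficient of $y^{N-k}$ in $(2t\beta)^N(-1)^NN!\,L_N^{(\nu'-1)}(y/(2t\beta))$, read off from \eqref{def-laguerre}, with $(-1)^k\mathbb E(e_k^N(X_t^2))$; this produces $\mathbb E(e_k^N(X_t^2))=\binom{N+\nu'-1}{k}\frac{N!}{(N-k)!}(2t\beta)^k$.

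No step is genuinely hard; the points requiring a little care are the bookkeeping of the scalings between $X_t$, $\tilde X_t$ and the $\beta=\infty$ solution, checking that the parameter $\nu'=\nu+1/(2\beta)$ is admissible (positive) so that Corollary~\ref{special-solution-B1} applies, and the elementary computation of the leading coefficient of $L_N^{(\alpha)}$ used to convert the product over zeros into the Laguerre polynomial. The main (mild) obstacle is simply organizing the coefficient comparison in \eqref{det-form-b-2} cleanly, exactly as in the Hermite case \eqref{det-form-2}.
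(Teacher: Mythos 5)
Your proposal is correct and follows essentially the same route as the paper's own proof: expansion in elementary symmetric polynomials, reduction via Corollary~\ref{constant-expectation-b} to the deterministic $\beta=\infty$ solution of Corollary~\ref{special-solution-B1}, identification of the resulting product over Laguerre zeros via the leading coefficient $(-1)^N/N!$, and a final coefficient comparison for \eqref{det-form-b-2}. Your explicit remark that $\nu'=\nu+1/(2\beta)>0$ (so that Lemma~\ref{char-zero-B} applies) is a small point of care the paper leaves implicit.
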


\begin{proof} As here we need processes with different parameters, we denote the  Bessel processes  and their normalizations with parameters
$\nu,\beta$ by $(X_t(\nu,\beta))_{t\ge 0}$ and $(\tilde X_t(\nu,\beta))_{t\ge 0}$ respectively.
Corollary \ref{constant-expectation-b} and Lemma \ref{special-solution-B1} yield
\begin{align}\label{computation-det-lagu}
\mathbb E\bigl(&\prod_{i=1}^N (y- X_{t,i}(\nu,\beta)^2)\bigr)=
\sum_{k=0}^N (-1)^k \mathbb E\bigl(e^N_k( X_{t}(\nu,\beta)^2)) \cdot y^{N-k}\\
&=\beta^{N}\sum_{k=0}^N (-1)^k \mathbb E\bigl(e^N_k(\tilde  X_{t}(\nu,\beta)^2)) \cdot (y/\beta)^{N-k}\notag\\
&=\beta^{N}\sum_{k=0}^N (-1)^k \mathbb E\bigl(e^N_k(\tilde  X_{t}(\nu+1/(2\beta),\infty)^2)) \cdot (y/\beta)^{N-k}\notag\\
&=\beta^{N} \sum_{k=0}^N  (-1)^k e^N_k(2t\cdot(z_1^{(\nu+1/(2\beta)-1)},\ldots, z_N^{(\nu+1/(2\beta)-1)})) \cdot(y/\beta)^{N-k} \notag\\
&=(2t \beta)^{N} \sum_{k=0}^N  (-1)^k  e^N_k(z_1^{(\nu+1/(2\beta)-1)},\ldots, z_N^{(\nu+1/(2\beta)-1)}) \cdot(y/(2t\beta))^{N-k}\notag\\
&=(2t \beta)^{N}  \prod_{i=1}^N ( y/(2t\beta) - z_i^{(\nu+1/(2\beta)-1)})\notag\\
&
=(2t \beta)^{N}\cdot(-1)^NN!\cdot L_N^{(\nu+1/(2\beta)-1)}(y/(2t\beta)) .
\notag\end{align}
Notice that the last equation follows from the fact that  $L_N^{(\nu+1/(2\beta)-1)}$
 has the leading coefficient $(-1)^N/N!$; see (5.1.8) in \cite{S}. This proves the first statement.
The second statement follows by a comparison of the coefficients in (\ref{computation-det-lagu}) and
 (\ref{def-laguerre}).
\end{proof}

\begin{remark}
Corollary \ref{det-formula-b} can be also applied in the limit case $\beta=0$. In this case, the 
Bessel process is independent of $\nu$ and 
a Brownian motion on $C_N^B$ where the paths are reflected on $\partial C_N^B$. 

Moreover, writing $\mathbb E(e^N_k( X_{t}(\nu,\beta)^2))$ as an integral
  over $C_N^B$ with the explicit densities
of the random variables $X_{t}(\nu,\beta)$ for arbitrary $\beta,\nu\ge0$ as in \cite{RV1}, 
implies by dominated convergence that $\mathbb E(e^N_k( X_{t}^2))$ depends continuously on $\beta,\nu$.
Hence, by  Corollary \ref{det-formula-b}, for $\beta=0$ and
$k=0,1,\ldots,N$,
$$\mathbb E(e^N_k( X_{t}^2))= (2t)^{k} \binom{N}{k}$$.
\end{remark}

\begin{remark}
For the cases $\beta=1/2,1,2$ and start in $0$, Corollary \ref{det-formula-b} admits an interpretation for
Wishart processes and chiral ensembles.
 In particular we obtain Proposition 12 and Corollary 1 of \cite{FG} in the Gaussian cases.
For the details of the connection we refer to Section 3 of \cite{RV2}. 
\end{remark}

\begin{remark}
Corollary \ref{det-formula-b} suggests that it might be interesting to study the expectation of
$\prod_{i=1}^N (y- X_{t,i})$
 for a Dunkl process $( X_{t})_{t\ge0}$ on $\mathbb R^N$ of type $B_{N}$ with  arbitrary multiplicities $(k_1,k_2)$ with $k_1,k_2>0$
 where the process starts in $0\in \mathbb R^N$.
For the theory of Dunkl processes and the notations we refer to \cite{CGY, GY, RV1, RV2}. 
The result is however  simple, as 
the distributions $P_{X_{t}}$ of the $X_t$ are invariant under sign changes in all coordinates separately.
As thus $\mathbb E\bigl(e^N_k( X_{t})\bigr)=0$ for  $k=1,\ldots,N$, we get
\begin{equation}\label{charpol_rm}
\mathbb E\bigl(\prod_{i=1}^N (y- X_{t,i})\bigr)
= \sum_{k=0}^N \mathbb E\bigl(e^N_k( X_{t})\bigr)\cdot y^{N-k}=y^N \quad\quad(y\in \mathbb R).
\end{equation} 
We also note that (\ref{charpol_rm}) coincides with the expectation of the characteristic polynomial of a general random square matrix with independent, centered elements.
 \end{remark}

\begin{remark}
Proposition \ref{elementary-symm-martingale-b} and Corollary \ref {det-formula-b} 
hold also for  Bessel processes $(X_{t}^D)_{t\ge0}$
 of type $D_N$ which live on the
closed Weyl chamber 
$$C_N^D=\{x\in\mathbb R^N: \quad x_1\ge \ldots\ge x_{N-1}\ge |x_N|\},$$
and which depend  on a one-dimensional multiplicity $\beta\ge 0$.
These processes
may be regarded a doubling of the processes $(X_{t}^B)_{t\ge0}$ of type B w.r.t.~the last coordinate
 with the same $\beta$ and $\nu=0$.
More precisely, the squared processes
  $((X_{t}^D)^2)_{t\ge0}$ and  $((X_{t}^B)^2)_{t\ge0}$ are equal in distribution.
This shows that Proposition \ref{elementary-symm-martingale-b} and Corollary \ref {det-formula-b} 
remain valid for  $\nu=0$ there.
\end{remark}

Funding: The first author has been supported by the Deutsche Forschungsgemeinschaft
(DFG) via RTG 2131 High-dimensional Phenomena in Probability - Fluctuations and Discontinuity to visit Dortmund for the preparation of this paper.


\begin{thebibliography}{999}


\bibitem[AKM1]{AKM1} S. Andraus, M. Katori, S. Miyashita, Interacting particles on the line 
and Dunkl intertwining operator of type $A$: Application to the freezing regime. 
\textit{J. Phys. A: Math. Theor. } 45  (2012) 395201.

\bibitem[AKM2]{AKM2} S. Andraus, M. Katori, S. Miyashita, 
Two limiting regimes of interacting Bessel processes. 
 \textit{J. Phys. A: Math. Theor. } 47  (2014) 235201.


\bibitem[AV]{AV} S. Andraus, M. Voit, Limit theorems
 for multivariate Bessel processes in the freezing regime. 
\textit{Stoch. Proc. Appl. }, https://doi.org/10.1016/j.spa.2018.12.011,
in press, arXiv:1804.03856.


 \bibitem[BF]{BF} T.H. Baker, P.J. Forrester, The Calogero-Sutherland model and generalized classical
polynomials. \textit{Comm. Math. Phys.} 188 (1997), 175--216.


\bibitem[CGY]{CGY} O. Chybiryakov, L. Gallardo, M. Yor, Dunkl processes and their radial parts relative to a root system. In:
 P. Graczyk et al. (eds.), Harmonic and stochastic analysis of Dunkl processes. Hermann, Paris 2008.


\bibitem[DG]{DG} P. Diaconis, A. Gamburd, 
Random matrices, magic squares and matching polynomials. 
\textit{Electron. J. Combin.} 11 (2004/06), no. 2, Research Paper 2, 26 pp.. 


\bibitem[DV]{DV} J.F. van Diejen, L. Vinet, Calogero-Sutherland-Moser Models.
 CRM Series in Mathematical Physics, Springer, Berlin, 2000.

\bibitem[DE]{DE1} I. Dumitriu, A. Edelman, Matrix models for beta-ensembles. \textit{J. Math. Phys.} 43 (2002),  5830-5847.


\bibitem[FG]{FG} P. Forrester, A. Gamburd, Counting formulas associated with some random matrix averages. 
\textit{J. Combin. Theory A} 113 (2006), 934-951.

\bibitem[GY]{GY} L. Gallardo, M. Yor, Some remarkable properties of the Dunkl martingale. In:
 Seminaire de Probabilites XXXIX, pp. 337-356, dedicated to P.A. Meyer, vol. 1874,
 Lecture Notes in Mathematics, Springer, Berlin, 2006.

\bibitem[GM]{GM} P.  Graczyk, J. Malecki, Strong solutions of non-colliding particle systems.
\textit{ Electron. J. Probab.} 19 (2014), 21 pp.

\bibitem[M]{Me} M. Mehta, Random matrices (3rd ed.), Elsevier/Academic Press, Amsterdam, 2004. 

\bibitem[P]{P} P.E. Protter, Stochastic Integration and Differential Equations. A New Approach.
 Springer, Berlin, 2003.


\bibitem[R]{R} M. R\"osler,
Generalized Hermite polynomials and the heat equation for Dunkl operators.
\textit{Comm. Math. Phys.} 192 (1998),  519-542.


\bibitem[RW]{RW} L.C.G.~Rogers, D.~Williams, Diffusions, Markov Processes and Martingales, Vol. 1 Foundations.
Cambridge University Press 2000.


\bibitem[RV1]{RV1} M. R\"osler, M. Voit, Markov processes related with Dunkl operators.
\textit{Adv. Appl. Math.}  21 (1998), 575-643.

\bibitem[RV2]{RV2} M. R\"osler, M. Voit, Dunkl theory, convolution algebras, and related Markov processes.
 In: P. Graczyk et al. (eds.), Harmonic and stochastic analysis of Dunkl processes. Hermann, Paris 2008.

\bibitem[Sch]{Sch} B. Schapira,
The Heckman-Opdam Markov processes.
 \textit{Probab. Theory Rel. Fields} 138 (2007), 
 495-519.

\bibitem[S]{S} G. Szeg{\"o}, Orthogonal Polynomials. 
Colloquium Publications (American Mathematical Society), Providence, 1939.


\bibitem[VW]{VW} M. Voit, J. Woerner, Functional central limit theorems
 for multivariate Bessel processes in the freezing regime. Preprint 2019,  arXiv:1901.08390. 


\end{thebibliography}
\end{document}